\documentclass[12pt]{amsart}
 
\usepackage{amssymb, amscd, txfonts}
\usepackage{graphicx}
\usepackage[all]{xy} 

 
\numberwithin{equation}{section}

\sloppy

\newtheorem{theorem}{Theorem}[section]
\newtheorem{proposition}[theorem]{Proposition}
\newtheorem{lemma}[theorem]{Lemma}
\newtheorem{corollary}[theorem]{Corollary}

\theoremstyle{definition}

\theoremstyle{remark}
\newtheorem{remark}[theorem]{Remark}


\renewcommand{\ker}{\operatorname{Ker}}

\newcommand{\Z}{\mathbb{Z}}
\newcommand{\Q}{\mathbb{Q}}

\newcommand{\C}{\mathbb{C}}
\newcommand{\proj}{{\mathbb P}}
\newcommand{\QZ}{\mathbb{Q}/\mathbb{Z}}

\newcommand{\Griff}{{\rm Griff}_1(X)}

\newcommand{\NS}{{\rm NS}_1(X)}
\newcommand{\T}{\mathcal{T}_1(X)}
\newcommand{\CH}{{\rm CH}}

\newcommand{\HH}{\mathcal{H}}
\newcommand{\HdZ}{\mathcal{H}^{d}(\mathbb{Z})}
\newcommand{\Hdn}{\mathcal{H}^{d}({\Z}/n)}
\newcommand{\HdQZ}{\mathcal{H}^{d}(\mathbb{Q}/\mathbb{Z})}

\DeclareMathOperator{\coker}{Coker}

\begin{document}

\title[]{Torsion 1-cycles and the coniveau spectral sequence}
\author[]{Shouhei Ma}
\thanks{Supported by Grant-in-Aid for Scientific Research (S) 15H05738.} 
\address{Department~of~Mathematics, Tokyo~Institute~of~Technology, Tokyo 152-8551, Japan}
\email{ma@math.titech.ac.jp}
\subjclass[2000]{14C25, 14C15, 14C35}
\keywords{} 

\begin{abstract}
We relate the torsion part of the Abel-Jacobi kernel in the Griffiths group of $1$-cycles 
to a birational invariant analogous to the degree $4$ unramified cohomology 
and an invariant associated to the generalized Hodge conjecture in degree $2{\dim}(X)-3$. 
We also describe in terms of ${\HH}$-cohomology 
the Griffiths group of $1$-cycles 
and the group of torsion cycles algebraically equivalent to zero of arbitrary dimension. 
\end{abstract} 

\maketitle


\section{Introduction}\label{sec:intro}

In this paper we are interested in a connection between algebraic cycles and birational invariant, 
or more specifically, 
a torsion subgroup of the Griffiths group of $1$-cycles and a certain cohomology group which can be thought of as 
a "homology counterpart" of the degree $4$ unramified cohomology. 

Let $X$ be a smooth complex projective variety of dimension $d$. 
Let ${\Griff}={\CH}_1(X)_{hom}/A_1(X)$ be the Griffiths group of $1$-cycles, 
the group of $1$-cycles homologous to zero modulo algebraic equivalence. 
Let ${\T}\subset{\Griff}$ be the image of the group of torsion $1$-cycles which is homologous to zero and whose Abel-Jacobi invariant is also zero. 
This subtle invariant, measuring deviation of torsion $1$-cycles with null Deligne cycle class from $A_1(X)$, 
was first introduced by Voisin \cite{Vo2}. 

On the other hand, for an abelian group $A$, let ${\HH}^q(A)$ be the Zariski sheaf on $X$ associated to the presheaf $U\mapsto H^q(U, A)$.  
The cohomology group $H^{d-k}(X, {\HH}^d(A))$ is a birational invariant of smooth projective varieties (\cite{CT-V}) 
which is analogous to the $k$-th unramified cohomology. 
It is known that the natural homomorphism 
\begin{equation*}
H^{d-k}(X, {\HdZ})\otimes{\QZ} \to H^{d-k}(X, {\HdQZ}) 
\end{equation*}
is injective (\cite{CT-V}). 
We will establish a link between ${\T}$ and this quotient group for $k=4$. 

Recall also that the coniveau filtration of $H^k(X, {\Z})$ is defined by (\cite{Gro}, \cite{Gro2}, \cite{B-O}) 
\begin{equation*}
N^rH^k(X, {\Z}) =  {\rm Ker} ( \: H^k(X, {\Z}) \to \varinjlim_{W\subset X}H^k(X-W, {\Z}) \: ), 
\end{equation*}
where $W$ ranges over closed algebraic subsets of $X$ of codimension $\geq r$. 
We will be concerned with the case $k=2d-3$,  
where the coniveau filtration has at most two steps by the Bloch-Ogus theory \cite{B-O}: 
\begin{equation*}
0= N^{d-1}H^{2d-3}(X, {\Z}) \; \subset \; N^{d-2}H^{2d-3}(X, {\Z}) \; \subset \; N^{d-3}H^{2d-3}(X, {\Z}) = H^{2d-3}(X, {\Z}). 
\end{equation*}
We write 
\begin{equation*}
\Lambda(X) = H^{2d-3}(X, {\Z})/N^{d-2}H^{2d-3}(X, {\Z}) 
\end{equation*}
and let ${}_{tor}\Lambda(X)$ be its torsion part. 
The group $\Lambda(X)$ gives a proper analogue of 
Grothendieck's sup\'erieur cohomological invariant (\cite{Gro} \S 9) in degree $2d-3$. 
His generalized Hodge conjecture (\cite{Gro2}) predicts that 
$N^{d-2}H^{2d-3}(X, {\Q})$ would be the largest sub ${\Q}$-Hodge structure contained in $H^{d-1,d-2}(X)\oplus H^{d-2,d-1}(X)$. 
If this holds, ${}_{tor}\Lambda(X)$ can be thought of as measuring defect of its naive integral version.

\begin{theorem}[Theorem \ref{thm:main}]\label{main}
Let $X$ be a smooth complex projective variety of dimension $d$. 
We have a short exact sequence 
\begin{equation*}
0  \to {}_{tor}\Lambda(X) 
    \to  H^{d-4}(X, {\HdQZ})/H^{d-4}(X, {\HdZ})\otimes{\QZ} 
    \to {\T} \to 0. 
\end{equation*}
When ${\CH}_0(X)$ is supported in dimension $\leq2$, e.g., when $X$ is rationally connected,  
then $\Lambda(X)$ is finite, ${\T}$ coincides with ${\Griff}$, and we have an exact sequence 
\begin{equation*}
0  \to \Lambda(X)  \to  H^{d-4}(X, {\HdQZ}) \to {\Griff} \to 0. 
\end{equation*}
\end{theorem}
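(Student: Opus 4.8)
The plan is to route everything through the Bloch--Ogus coniveau spectral sequence $E_2^{p,q}(A)=H^p(X,\HH^q(A))\Rightarrow H^{p+q}(X,A)$ for the three coefficient groups $A=\Z,\Q,\QZ$ simultaneously, exploiting the vanishing $\HH^q(A)=0$ for $q>d$ and $H^p(X,\HH^q(A))=0$ for $p>q$ (\cite{B-O}). In total degree $2d-3$ the only surviving $E_2$-terms are $E_2^{d-3,d}$ and $E_2^{d-2,d-1}$, and in degree $2d-2$ they are $E_2^{d-2,d}$ and $E_2^{d-1,d-1}$; a direct check shows that in this range every differential other than $d_2\colon E_2^{d-3,d}\to E_2^{d-1,d-1}$ vanishes for dimension reasons. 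Hence $\Lambda(X)$, being the top graded piece $E_\infty^{d-3,d}$ of the coniveau filtration on $H^{2d-3}(X,\Z)$, is identified with $\ker\bigl(d_2\colon H^{d-3}(X,\HdZ)\to H^{d-1}(X,\HH^{d-1}(\Z))\bigr)$. First I would also rewrite the middle term: since $\HH^d(\Q)=\HdZ\otimes\Q$, the long exact sequence of $0\to\Z\to\Q\to\QZ\to0$ together with the injectivity of \cite{CT-V} gives a canonical (Bockstein) isomorphism
\begin{equation*}
H^{d-4}(X,\HdQZ)\big/\bigl(H^{d-4}(X,\HdZ)\otimes\QZ\bigr)\;\xrightarrow{\ \sim\ }\;{}_{tor}H^{d-3}(X,\HdZ).
\end{equation*}
Thus the asserted sequence is equivalent to $0\to{}_{tor}\Lambda(X)\to{}_{tor}H^{d-3}(X,\HdZ)\to\T\to0$ with the evident maps, and since ${}_{tor}\Lambda(X)=\ker(d_2)\cap{}_{tor}H^{d-3}(X,\HdZ)$, it reduces to producing an isomorphism $\T\cong d_2\bigl({}_{tor}H^{d-3}(X,\HdZ)\bigr)$.

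The heart of the argument is a cycle-theoretic reading of $d_2$. I would use the cycle class map into $\HH$-cohomology $\CH^{d-1}(X)\to H^{d-1}(X,\HH^{d-1}(\Z))=E_2^{d-1,d-1}$, which is surjective with kernel exactly $A_1(X)$, so that $E_2^{d-1,d-1}\cong\CH_1(X)/A_1(X)$ and the edge map into $E_\infty^{d-1,d-1}\subset H^{2d-2}(X,\Z)$ is the topological cycle class. Because $\im(d_2)=\ker\bigl(E_2^{d-1,d-1}\to E_\infty^{d-1,d-1}\bigr)$, this already gives $\Griff\cong\im(d_2)$. The refinement to $\T$ comes from restricting to torsion: the torsion classes in $H^{d-3}(X,\HdZ)$ are precisely the images under the Bockstein $\partial$ of $H^{d-4}(X,\HdQZ)$, and chasing them through the compatible $\Z$- and $\QZ$-coefficient spectral sequences, together with Bloch's torsion map ${}_{tor}\CH^{d-1}(X)\to H^{2d-3}(X,\QZ)$ and the Deligne/Abel--Jacobi cycle class, one verifies that $d_2\bigl({}_{tor}H^{d-3}(X,\HdZ)\bigr)$ is exactly the image in $\Griff$ of the torsion $1$-cycles that are homologically trivial with vanishing Abel--Jacobi invariant, i.e. $\T$. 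Granting this, the sequence $0\to{}_{tor}\Lambda(X)\to{}_{tor}H^{d-3}(X,\HdZ)\xrightarrow{d_2}\T\to0$ is exact by construction.

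For the second assertion I would invoke the Bloch--Srinivas decomposition of the diagonal: when $\CH_0(X)$ is supported in dimension $\le2$ it forces $H^{d-4}(X,\HH^d(\Q))=H^{d-3}(X,\HH^d(\Q))=0$, whence $H^{d-4}(X,\HdZ)\otimes\QZ=0$, the group $\Lambda(X)$ is finite (so ${}_{tor}\Lambda=\Lambda$), and the Bockstein gives $H^{d-4}(X,\HdQZ)\cong H^{d-3}(X,\HdZ)$, which is then entirely torsion. The same hypothesis trivializes the relevant intermediate Jacobian on homologically trivial $1$-cycles, so every Griffiths class is torsion with vanishing Abel--Jacobi invariant and $\T=\Griff$. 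Substituting into the first sequence collapses it to $0\to\Lambda(X)\to H^{d-4}(X,\HdQZ)\to\Griff\to0$.

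The hard part is the geometric identification in the second paragraph: matching the spectral-sequence image $d_2\bigl({}_{tor}H^{d-3}(X,\HdZ)\bigr)$ with the delicate group $\T$ requires a careful comparison of the coniveau filtration with the Abel--Jacobi/Deligne cycle class and Bloch's map, and in particular a proof that the kernel of the $\HH$-cohomology cycle class is precisely algebraic equivalence. By contrast, the identification of $\Lambda(X)$, the rewriting of the middle term, and the final assembly are formal consequences of the spectral-sequence bookkeeping and the coefficient long exact sequence.
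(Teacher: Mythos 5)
Your skeleton is the same as the paper's: identify $\Lambda(X)$ with $\ker\bigl(d_2\colon E_2^{d-3,d}\to E_2^{d-1,d-1}\bigr)$ via Proposition \ref{prop:B-O deg -4 to -2}, rewrite the middle term as ${}_{tor}H^{d-3}(X,{\HdZ})$ using the direct limit of \eqref{eqn:UCT homology edge} with $k=4$, and reduce everything to showing that $d_2$ carries ${}_{tor}H^{d-3}(X,{\HdZ})$ onto exactly ${\T}$. The formal bookkeeping and the deduction of the second assertion (finiteness of $\Lambda(X)$, vanishing of $J^{2d-3}(X)_{tr}$ and of $H^{d-4}(X,{\HH}^d({\Q}))$ under the Bloch--Srinivas hypothesis) are correct and agree with the paper.

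However, the step you yourself flag as ``the hard part'' is a genuine gap, and it is precisely where all the content of the theorem lives. Two things are missing. First, the claim that $d_2\bigl({}_{tor}H^{d-3}(X,{\HdZ})\bigr)$ can be computed by ``chasing through the compatible spectral sequences'' requires proving the commutativity (up to sign) of the diagram \eqref{key commutative diagram}: compatibility of the Bockstein $\alpha$ of \eqref{eqn:Bockstein CD} with the $d_2$ differential (which the paper checks by an exact-couple argument interpolating \eqref{eqn:center of commutativity}), and compatibility of $\alpha$ with the topological Bockstein $\beta$ via the edge maps (which rests on \cite{CT-S-S}, Proposition 1). Neither is automatic, and without them the snake-lemma connecting map $\lambda_n\colon{}_{n}{\Griff}\to\Lambda(X)/n$ is not even defined in a way that relates to cycles. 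Second, and more seriously, even granting the diagram one only obtains $d_2\bigl({}_{tor}H^{d-3}(X,{\HdZ})\bigr)=\ker(\lambda_\infty)$ for the abstractly constructed limit map $\lambda_\infty\colon{}_{tor}{\Griff}\to\Lambda(X)\otimes{\QZ}$. Identifying $\ker(\lambda_\infty)$ with ${\T}=\ker({}_{tor}\lambda_{tr})$ requires proving that $\lambda_\infty$ coincides with the restriction to torsion of the transcendental Abel--Jacobi map; this is Voisin's Proposition 4.4 in \cite{Vo2} (the comparison of the connecting map with $cl_{d-1,tors,tr}$), and it is the only point in the whole proof where the Abel--Jacobi map or the Deligne cycle class actually enters. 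Your proposal gestures at ``Bloch's torsion map and the Deligne/Abel--Jacobi cycle class'' but supplies no argument, so the sequence you construct terminates in an unidentified kernel rather than in ${\T}$.
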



In \S \ref{sec:nAk(X)}, independently of Theorem \ref{main}, 
we study the $n$-torsion part ${}_{n}A^p(X)$ of the group $A^p(X)\subset{\CH}^p(X)$ 
of codimension $p$ cycles algebraically equivalent to zero with $p$ arbitrary. 
We deduce an exact sequence (Theorem \ref{prop: torsion A^p(X) and H-cohomology}) 
\begin{equation*}
0 \to H^{p-1}(X, \mathcal{K}_p)/n \to H^{p-1}(X, {\HH}^p({\Z}))/n \to {}_{n}A^p(X) \to 0, 
\end{equation*}
where $\mathcal{K}_p$ is the Zariski sheaf on $X$ associated to the Quillen $K$-theory 
(thanks to the work of Kerz \cite{Ke}, we may also replace it by the Milnor $K$-theory sheaf). 
For $p=d-1$ this is complementary to Theorem \ref{main}, 
controlling by $\mathcal{H}$-cohomology the group of torsion $1$-cycles in ${\CH}_1(X)_{hom}$ 
whose Abel-Jacobi invariant is contained in the algebraic part of the intermediate Jacobian.

The interaction between algebraic cycles and birational invariant has been originated from the work of Bloch-Ogus \cite{B-O}, 
and developed around the nineties by 
Parimala, Colliot-Th\'el\`ene, Barbieri-Viale, Kahn and others 
(see, e.g., \cite{Pa}, \cite{CT1}, \cite{BV1}, \cite{BV2}, \cite{Ka} and references in \cite{CT1}). 
Especially, the relationship between degree $3$ unramified cohomology $H^3_{nr}(X)=H^0(X, {\HH}^3)$ and 
codimension $2$ cycles was extensively studied. 
After the Bloch-Kato conjecture was proved by Rost-Voevodsky \cite{Voe}, 
Colliot-Th\'el\`ene and Voisin \cite{CT-V} revisited this subject and established a link between $H^3_{nr}(X, {\QZ})$ and 
the defect of the integral Hodge conjecture for codimension $2$ cycles. 
Voisin \cite{Vo2} extended this result to the degree $4$ unramified cohomology $H^4_{nr}(X, {\QZ})$, 
relating it to $\mathcal{T}^3(X)$. 
Colliot-Th\'el\`ene and Voisin \cite{CT-V} also found a relation between  
the defect of the integral Hodge conjecture for $1$-cycles and $H^{d-3}(X, {\HdQZ})$. 
Theorem \ref{main} is the ``homology'' counterpart of Voisin's interpretation of $H^4_{nr}$, 
and is also the extension of the second theorem of Colliot-Th\'el\`ene--Voisin to degree $4$. 
Note that in dimension $d=4$, Theorem \ref{main} recovers Voisin's result. 
We can also give a refinement of Voisin's result (Remark \ref{remark}). 

We would like to thank the referee for many valuable suggestions.  
In particular, Proposition \ref{prop:alg=rat equiv} was taught by the referee.

\vspace{3mm}

\textbf{Notation.} 
If $A$ is an abelian group and $n>0$ a natural number, 
we denote $A/n=A/nA$ and ${}_{n}A = \{ x\in A | nx=0 \}$. 
We write ${}_{tor}A$ for the subgroup of torsion elements. 
If $f:A\to B$ is a homomorphism, ${}_{n}f:{}_{n}A\to {}_{n}B$ and ${}_{tor}f:{}_{tor}A\to {}_{tor}B$ denote its restriction 
to the respective torsion parts. 
Whether a sheaf is considered in the Zariski topology or in the classical topology will be clear from the context.


\section{Preliminaries}\label{sec:preliminaries}

Let $X$ be a smooth complex projective variety of dimension $d$. 
Let $A$ be an abelian group, which will be one of ${\Z}$, ${\Q}$, ${\QZ}$ or ${\Z}/n$ in the sequel. 
Let ${\HH}^q(A)$ be the Zariski sheaf on $Z$ associated to the presheaf $U\mapsto H^q(U, A)$ 
defined by the singular cohomology of Zariski open sets. 
Bloch-Ogus \cite{B-O} computed the $E_2$ page of Grothendieck's coniveau spectral sequence (\cite{Gro} \S 10) 
and obtained a spectral sequence 
\begin{equation*}
E_{2}^{p,q} = H^p(X, {\HH}^q(A)) \; \;  \Rightarrow \; \; H^{p+q}(X, A) 
\end{equation*}
which converges to the coniveau filtration of $H^{k}(X, A)$ 
and which coincides with the Leray spectral sequence for the natural continuous map $X_{cl}\to X_{Zar}$ 
from the classical topology to the Zariski topology. 
They showed that 
\begin{equation*}
H^p(X, {\HH}^q(A)) = 0 \quad \textrm{for} \quad p>q, 
\end{equation*}  
\begin{equation*}
H^p(X, {\HH}^p({\Z})) \simeq {\rm NS}^p(X), 
\end{equation*}  
where ${\rm NS}^p(X)={\rm CH}^p(X)/A^p(X)$ is the group of codimension $p$ cycles modulo algebraic equivalence. 
The edge morphism ${\rm NS}^p(X) \to H^{2p}(X, {\Z})$ equals to the cycle map. 
Note that ${\HH}^q(A)=0$ for $q>d$ because 
smooth affine varieties of dimension $d$ have homotopy type of CW complex of real dimension $d$. 
Hence we also have $E_2^{p,q}=0$ for $q>d$. 
In this way the Bloch-Ogus spectral sequence is confined to the triangle 
\begin{equation*}
p\geq0, \quad q\leq d, \quad p\leq q. 
\end{equation*}

The group $E_2^{0,k}=H^0(X, {\HH}^k(A))$ is usually called the $k$-th unramified cohomology group with coefficients in $A$, 
and denoted by $H^k_{nr}(X, A)$. 
As shown in \cite{CT1}, \cite{BV2}, \cite{CT-V}, for smooth projective $X$, it is birationally invariant, 
and also stably invariant, namely $H^k_{nr}(X, A)\simeq H^k_{nr}(X\times {\proj}^r, A)$. 
When $A={\Z}/n$, $H^k_{nr}(X, {\Z}/n)$ can be computed in terms of the Galois cohomology of the function field of $X$ (see \cite{CT1}), 
which makes $H^k_{nr}(X, {\Z}/n)$ an effective obstruction in the Noether problem.

In this paper we are mainly interested in the cohomology group which sits in the position of ``homology counterpart'' of $H^k_{nr}(X, A)$ 
in the Bloch-Ogus spectral sequence, that is, 
\begin{equation*}
H^{d-k}(X, {\HH}^d(A)). 
\end{equation*}
It is proven by Colliot-Th\'el\`ene and Voisin \cite{CT-V} that this group is also birationally invariant for smooth projective $X$. 
(This can also be seen using the blow-up formula in \cite{BV2}). 
Furthermore, we have 

\begin{lemma}
The group $H^{d-k}(X, {\HH}^d(A))$, $d={\dim}X$, is stably invariant. 
\end{lemma}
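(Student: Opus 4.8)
The plan is to derive stable invariance from a projective bundle formula for $\HH$-cohomology, combined with the vanishing $\HH^q(A)=0$ for $q>\dim X$ recalled above. Here stable invariance means that for every $r\geq0$ the natural pullback induces an isomorphism
\begin{equation*}
H^{d-k}(X, \HH^{d}(A)) \; \simeq \; H^{(d+r)-k}(X\times\proj^r, \HH^{d+r}(A)), \qquad \dim(X\times\proj^r)=d+r.
\end{equation*}
First I would invoke the projective bundle formula for the Bloch-Ogus theory, which for the trivial bundle $X\times\proj^r\to X$ of relative dimension $r$ reads
\begin{equation*}
H^p(X\times\proj^r, \HH^q(A)) \; \simeq \; \bigoplus_{i=0}^{r} H^{p-i}(X, \HH^{q-i}(A)).
\end{equation*}
This is standard for a theory satisfying the Bloch-Ogus axioms (homotopy invariance together with the Gysin sequence); in the present setting it can be extracted from the blow-up formula of Barbieri-Viale \cite{BV2} already cited, or built up by induction on $r$ from the Gysin sequence for $X\times\proj^{r-1}\hookrightarrow X\times\proj^r$. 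The shift by $i$ in \emph{both} indices reflects that the hyperplane class lives in $H^1(\proj^r,\HH^1(\Z))\simeq\NS^1$, consistent with the ordinary projective bundle formula for $H^{p+q}(-,A)$.

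Next I would specialize to the top sheaf on the $(d+r)$-dimensional variety $X\times\proj^r$. Setting $p=(d+r)-k$ and $q=d+r$, the formula gives
\begin{equation*}
H^{(d+r)-k}(X\times\proj^r, \HH^{d+r}(A)) \; \simeq \; \bigoplus_{i=0}^{r} H^{(d+r-k)-i}(X, \HH^{(d+r)-i}(A)).
\end{equation*}
Each summand is a cohomology group on $X$ of the sheaf $\HH^{d+r-i}(A)$, and since $\dim X=d$ the vanishing $\HH^{q'}(A)=0$ for $q'>d$ kills every term with $d+r-i>d$, i.e. with $i<r$. Only $i=r$ survives, and it equals $H^{d-k}(X, \HH^{d}(A))$, yielding the desired isomorphism.

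The essential point, and the only step requiring genuine input, is the projective bundle formula; everything else is bookkeeping in the two indices and an appeal to the vanishing range of $\HH^q(A)$ established in this section. The main thing to verify with care is that the isomorphism is induced by the natural pullback and the $\HH$-cohomology correspondences, so that it is compatible with the cycle-class edge maps, and that the formula holds uniformly for all coefficient groups $A\in\{\Z,\Q,\QZ,\Z/n\}$. The latter follows because the homotopy invariance and Gysin maps underlying the formula hold integrally, hence for every such $A$ by functoriality of the coefficient sequences.
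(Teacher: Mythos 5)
Your proof is correct and follows essentially the same route as the paper: both invoke Barbieri-Viale's projective bundle formula $H^p(X\times\proj^r, \HH^q(A)) \simeq \bigoplus_{i=0}^{r} H^{p-i}(X, \HH^{q-i}(A))$, specialize to $p=d+r-k$, $q=d+r$, and use the vanishing $\HH^{q'}(A)=0$ for $q'>d$ on $X$ to isolate the $i=r$ summand. The additional remarks you make about how the formula could be re-derived and about compatibility with pullbacks go beyond what the paper records but do not change the argument.
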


\begin{proof}
In \cite{BV2} Barbieri-Viale proved the general formula 
\begin{equation*}
H^p(X\times{\proj}^r, {\HH}^q(A)) \simeq \bigoplus_{i=0}^{r} H^{p-i}(X, {\HH}^{q-i}(A)). 
\end{equation*}
Putting $q=d+r$ and $p=d+r-k$ where $d={\dim}X$, we obtain 
\begin{eqnarray*}
H^{d+r-k}(X\times{\proj}^{r}, {\HH}^{d+r}(A)) 
& \simeq & 
\bigoplus_{i=0}^{r} H^{d+r-k-i}(X, {\HH}^{d+r-i}(A)) \\ 
& = & 
H^{d-k}(X, {\HH}^{d}(A)) 
\end{eqnarray*}
because over $X$ the sheaf ${\HH}^{q'}$ is zero for $q'>d$. 
\end{proof}

Although $H^{d-k}(X, {\HH}^d(A))$ shares some properties with $H^k_{nr}(X, A)$, 
these two groups may be rather different, and $H^{d-k}(X, {\HH}^d(A))$ has some defects compared to $H^k_{nr}(X, A)$: 
for example, $H^{d-k}(X, {\HH}^d({\Z}))$ may have torsion; 
cup product is not defined for $H^{d-k}(X, {\HH}^d(A))$; and 
currently we do not know how to calculate $H^{d-k}(X, {\HH}^d({\Z}/n))$ in terms of the function field of $X$. 
It is well-known that $H^{d-1}(X, {\HH}^d({\QZ}))$ and $H^{d-2}(X, {\HH}^d({\QZ}))$ vanish for rationally connected $X$ 
(cf.~Proposition \ref{H(d-1,d)and H(d-2,d)}). 
According to a conjecture of Voisin, the group $H^{d-3}(X, {\HH}^d({\QZ}))$ 
which by \cite{CT-V} measures the defect of the Hodge conjecture for degree $2$ integral homology classes, 
should also vanish for such $X$ (see \cite{Vo3} and references therein). 
Actually she proves in loc.~cit.~ that this vanishing is implied by the Tate conjecture for degree $2$ Tate classes on surfaces over finite fields.  
By contrast, there are known many examples of rationally connected $X$ for which 
$H^{2}_{nr}(X, {\QZ})={}_{tor}H^3(X, {\Z})$ or $H^{3}_{nr}(X, {\QZ})=Z^4(X)$ is nontrivial (cf.~\cite{CT1}, \cite{CT-V}). 



Bloch-Srinivas \cite{B-S} proved that the Zariski sheaf ${\HH}^3({\Z})$ is torsion-free using the Merkurjev-Suslin Theorem \cite{M-S}. 
This was generalized to ${\HH}^q({\Z})$ for arbitrary $q$ by Barbieri-Viale \cite{BV3} and Colliot-Th\'el\`ene--Voisin \cite{CT-V}, 
as a consequence of the Bloch-Kato conjecture proved by Rost-Voevodsky \cite{Voe}. 
As a result, we have a short exact sequence of Zariski sheaves (\cite{CT-V}, \cite{BV1}) 
\begin{equation}\label{eqn: Hsheaf torsion-free}
0 \to {\HH}^q({\Z}) \stackrel{n}{\to} {\HH}^q({\Z}) \to {\HH}^q({\Z}/n) \to 0 
\end{equation}
for every $n>0$. 
Taking cohomology long exact sequence, we obtain 

\begin{proposition}[\cite{CT-V}, \cite{BV1}]
For every $p, q\geq0$ and $n>0$ we have an exact sequence 
\begin{equation}\label{eqn:UCT}
0 \to H^p(X, {\HH}^q({\Z}))/n \to H^p(X, {\HH}^q({\Z}/n)) \to {}_{n}H^{p+1}(X, {\HH}^q({\Z})) \to 0. 
\end{equation}
\end{proposition}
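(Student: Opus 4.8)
The plan is to feed the short exact sequence of Zariski sheaves (\ref{eqn: Hsheaf torsion-free}) into the cohomology functor and then to cut the resulting long exact sequence into the claimed universal-coefficient pieces. First I would apply $H^{\bullet}(X,-)$ on the Zariski site to
\begin{equation*}
0 \to {\HH}^q({\Z}) \stackrel{n}{\to} {\HH}^q({\Z}) \to {\HH}^q({\Z}/n) \to 0,
\end{equation*}
which is short exact precisely because ${\HH}^q({\Z})$ is torsion-free (the Bloch-Kato input recorded just above). This yields the long exact sequence
\begin{equation*}
\cdots \to H^p(X, {\HH}^q({\Z})) \to H^p(X, {\HH}^q({\Z})) \to H^p(X, {\HH}^q({\Z}/n)) \stackrel{\partial}{\to} H^{p+1}(X, {\HH}^q({\Z})) \to H^{p+1}(X, {\HH}^q({\Z})) \to \cdots,
\end{equation*}
in which the two self-maps of $H^p$ and $H^{p+1}$ are induced by the multiplication-by-$n$ endomorphism of the sheaf ${\HH}^q({\Z})$.

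The one point meriting a word is the identification of these induced self-maps as multiplication by $n$ on cohomology; this is immediate, since $H^p(X,-)$ is an additive functor and therefore commutes with multiplication by the integer $n$. Granting this, exactness at the left copy of $H^p(X, {\HH}^q({\Z}))$ shows that the image of multiplication by $n$ is $n\cdot H^p(X, {\HH}^q({\Z}))$, so the induced map $H^p(X, {\HH}^q({\Z}))/n \to H^p(X, {\HH}^q({\Z}/n))$ is injective. At the other end, exactness identifies the image of $\partial$ with the kernel of multiplication by $n$ on $H^{p+1}(X, {\HH}^q({\Z}))$, namely ${}_{n}H^{p+1}(X, {\HH}^q({\Z}))$. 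Splicing these two observations produces the short exact sequence (\ref{eqn:UCT}).

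Since the statement is a purely formal consequence of the long exact sequence, I do not expect any genuine obstacle here; the substantive content lies entirely in the short exactness of (\ref{eqn: Hsheaf torsion-free}), which has already been established, and the remainder is the standard extraction of a Bockstein-type short exact sequence from a multiplication-by-$n$ sheaf sequence.
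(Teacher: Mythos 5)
Your proposal is correct and coincides with the paper's own argument: the paper likewise obtains \eqref{eqn:UCT} by taking the long exact cohomology sequence of the short exact sheaf sequence \eqref{eqn: Hsheaf torsion-free} and splicing out the resulting Bockstein pieces. The only substantive input is the torsion-freeness of ${\HH}^q({\Z})$, which you correctly identify as already established.
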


For example, when $(p, q)=(0, k)$, this gives an exact sequence 
\begin{equation*}\label{eqn:UCT cohomology edge}
0 \to H^k_{nr}(X, {\Z})/n \to H^k_{nr}(X, {\Z}/n) \to {}_{n}H^{1}(X, {\HH}^k({\Z})) \to 0. 
\end{equation*}
On the mirror edge, namely for $(p, q)=(d-k, d)$, we have an exact sequence 
\begin{equation}\label{eqn:UCT homology edge}
0 \to H^{d-k}(X, {\HH}^d({\Z}))/n \to H^{d-k}(X, {\HH}^d({\Z}/n)) \to {}_{n}H^{d-(k-1)}(X, {\HH}^d({\Z})) \to 0. 
\end{equation}

We remark that the map $H^p(X, {\HH}^q({\Z}/n)) \to {}_{n}H^{p+1}(X, {\HH}^q({\Z}))$ in \eqref{eqn:UCT} 
is induced from the connecting map in the snake lemma applied to the diagram 
\begin{equation}\label{eqn:Bockstein CD}
\begin{CD}
@. 0 @.   0  \\
@.   @VVV   @VVV \\ 
\cdots @>>> \bigoplus_{X^{(p)}}H^{q-p}({\C}(x), {\Z}) @>>>   \bigoplus_{X^{(p+1)}}H^{q-p-1}({\C}(x), {\Z}) @>>> \cdots \\ 
@.     @VVnV   @VVnV \\ 
\cdots @>>> \bigoplus_{X^{(p)}}H^{q-p}({\C}(x), {\Z}) @>>>   \bigoplus_{X^{(p+1)}}H^{q-p-1}({\C}(x), {\Z}) @>>> \cdots \\ 
@.    @VVV   @VVV \\ 
\cdots @>>> \bigoplus_{X^{(p)}}H^{q-p}({\C}(x), {\Z}/n) @>>>   \bigoplus_{X^{(p+1)}}H^{q-p-1}({\C}(x), {\Z}/n) @>>> \cdots \\ 
@.     @VVV   @VVV \\ 
@. 0 @.   0. @.  
\end{CD}
\end{equation}
Here the horizontals are part of the Bloch-Ogus complexes computing the cohomology of ${\HH}^q({\Z})$ and ${\HH}^q({\Z}/n)$ 
($=$ the $q$-th row in the $E_1$ page of the coniveau spectral sequence). 
The columns are exact by the exactness of \eqref{eqn: Hsheaf torsion-free} for smooth Zariski open sets of $\overline{ \{ x \} }$.

Colliot-Th\'el\`ene and Voisin \cite{CT-V}, \cite{Vo2} studied the sequence \eqref{eqn:UCT} in the following cases 
and found connection with algebraic cycles. 
\begin{itemize}
\item $(p, q)=(0, 3)$, 
with the defect $Z^4(X)$ of the integral Hodge conjecture for codimension $2$ cycles \cite{CT-V}; 
\item $(p, q)=(0, 4)$, 
with the torsion Abel-Jacobi kernel $\mathcal{T}^3(X)$ in ${\rm Griff}^3(X)$ \cite{Vo2}; 
\item $(p, q)=(d-3, d)$, 
with the defect $Z_2(X)$ of the integral Hodge conjecture for $1$-cycles \cite{CT-V}. 
\end{itemize} 
In \S \ref{sec:T1(X)} we study the case $(p, q)=(d-4, d)$ and relate it also to $1$-cycles.


\section{Griffiths group of $1$-cycles}\label{sec:Griff1(X)}

Let $X$ be a smooth projective complex variety of dimension $d$. 
We consider the Bloch-Ogus spectral sequence in degree $\geq2d-4$ and relate it to the Griffiths group of $1$-cycles. 
The situation of the spectral sequence is somewhat similar to that in degree $\leq4$.

\begin{proposition}\label{prop:B-O deg -4 to -2}
We have an exact sequence 
\begin{equation}\label{eqn:B-O deg -4 to -2}
\begin{split}
     & \qquad  \qquad H^{2d-4}(X, {\Z}) \stackrel{e_4}{\to} H^{d-4}(X, {\HdZ}) \\ 
\stackrel{d_2}{\to} \; & H^{d-2}(X, {\HH}^{d-1}({\Z})) \to H^{2d-3}(X, {\Z}) \stackrel{e_3}{\to} H^{d-3}(X, {\HdZ}) \\ 
\stackrel{d_2}{\to} \; & {\NS} \stackrel{cl}{\to} H^{2d-2}(X, {\Z}) \stackrel{e_2}{\to} H^{d-2}(X, {\HdZ}) \to 0. 
\end{split}
\end{equation}
Here ${\NS}={\CH}_1(X)/A_1(X)$ is the group of $1$-cycles modulo algebraic equivalence and 
$cl$ is the cycle map on ${\NS}$.   
The kernel of $e_i$ is the coniveau subgroup $N^{d-i+1}H^{2d-i}(X, {\Z})$ of $H^{2d-i}(X, {\Z})$.   
\end{proposition}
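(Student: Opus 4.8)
The plan is to read the claimed sequence directly off the Bloch--Ogus spectral sequence $E_2^{p,q} = H^p(X, {\HH}^q({\Z})) \Rightarrow H^{p+q}(X, {\Z})$, exploiting that in total degree $\geq 2d-4$ it is confined to a narrow strip of the triangle $0\leq p\leq q\leq d$ in which essentially only the differential $d_2$ can act.

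First I would enumerate, in each total degree $k\in\{2d-4,\,2d-3,\,2d-2\}$, the finitely many surviving terms $E_2^{p, k-p}$ permitted by the constraint $p\leq q\leq d$. In degree $2d-2$ these are $(p,q)=(d-2, d)$ and $(d-1, d-1)$; in degree $2d-3$ they are $(d-3, d)$ and $(d-2, d-1)$; in degree $2d-4$ they are $(d-4, d)$, $(d-3, d-1)$ and $(d-2, d-2)$. I would then check that on each of these terms every differential other than $d_2$ has its source or its target forced outside the triangle---either into the region $p>q$, where $H^p(X, {\HH}^q)=0$, or into $q>d$, where ${\HH}^q=0$---so that $E_3 = E_\infty$ throughout this range. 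Concretely, the only surviving differentials are $d_2\colon E_2^{d-4, d}\to E_2^{d-2, d-1}$ and $d_2\colon E_2^{d-3, d}\to E_2^{d-1, d-1}$, whereas $E_2^{d-2, d}$ already equals $E_\infty^{d-2, d}$ because its $d_2$-target $E_2^{d, d-1}$ vanishes.

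Next I would pin down the edge maps and the filtration steps. Since the top row $q=d$ admits no incoming differentials (these would originate in $q>d$), for $k=2d-i$ one has $N^{d-i}H^{2d-i}(X,{\Z}) = H^{2d-i}(X,{\Z})$, and the edge map $e_i$ is the composite $H^{2d-i}(X, {\Z})\twoheadrightarrow \mathrm{gr}^{d-i}_N = E_\infty^{d-i, d}\hookrightarrow E_2^{d-i, d}=H^{d-i}(X, {\HdZ})$; thus $\ker(e_i)=N^{d-i+1}H^{2d-i}(X,{\Z})$ and $\im(e_i)=\ker(d_2)$. Dually, the two interior graded pieces $E_\infty^{d-2, d-1}=\coker(d_2\colon E_2^{d-4,d}\to E_2^{d-2,d-1})$ and $E_\infty^{d-1,d-1}=\coker(d_2\colon E_2^{d-3,d}\to {\NS})$ occupy the bottom of the coniveau filtrations of $H^{2d-3}(X,{\Z})$ and $H^{2d-2}(X,{\Z})$, hence equal $N^{d-2}H^{2d-3}(X,{\Z})$ and $N^{d-1}H^{2d-2}(X,{\Z})$ respectively. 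Invoking the Bloch--Ogus identification $E_2^{d-1,d-1}\simeq{\NS}$ together with the fact that its edge map to $H^{2d-2}(X, {\Z})$ is the cycle map, the arrow $cl$ in the sequence is precisely ${\NS}\twoheadrightarrow\coker(d_2)\hookrightarrow H^{2d-2}(X,{\Z})$; likewise the unlabelled map $H^{d-2}(X, {\HH}^{d-1}({\Z}))\to H^{2d-3}(X, {\Z})$ is $E_2^{d-2,d-1}\twoheadrightarrow\coker(d_2)\hookrightarrow H^{2d-3}(X,{\Z})$.

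With these identifications in hand, exactness becomes bookkeeping. At each ${\HH}$-cohomology term it reads either $\im(e_i)=\ker(d_2)$, or $\im(d_2)$ equals the kernel of the following arrow; both hold because $E_\infty$ is simultaneously the kernel of the outgoing $d_2$ and the cokernel of the incoming $d_2$. At each $H^{2d-i}(X, {\Z})$ it matches the image of the injective filtration inclusion with $\ker(e_i)=N^{d-i+1}H^{2d-i}(X,{\Z})$, and the final surjectivity of $e_2$ is the equality $E_\infty^{d-2, d}=E_2^{d-2, d}$. I expect the one genuinely delicate point to be the vanishing of all higher differentials: one must verify, term by term, that for $r\geq 3$ both the $d_r$-source and the $d_r$-target of each relevant entry fall outside the triangle, so that no $d_3$ or higher arrow can disturb the splicing. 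Once that is secured, concatenating the short exact sequences attached to the two surviving $d_2$'s and to the three edge maps yields the asserted long exact sequence.
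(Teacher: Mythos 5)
Your argument is correct and follows essentially the same route as the paper: both read the sequence off the Bloch--Ogus spectral sequence by observing that in total degrees $2d-4$ through $2d-2$ the only possibly nonzero differentials are $d_2\colon E_2^{d-4,d}\to E_2^{d-2,d-1}$ and $d_2\colon E_2^{d-3,d}\to E_2^{d-1,d-1}$, so that $E_3=E_\infty$ there, and then splice the resulting edge maps and cokernels using the identification of the limit with the coniveau filtration. Your version merely spells out the bookkeeping (term enumeration, vanishing of higher differentials, identification of the graded pieces) that the paper leaves implicit.
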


The same exact sequence holds for ${\Z}/n$-coefficients with ${\NS}$ replaced by ${\NS}/n$. 

\begin{proof}
As illustrated in Figure \ref{figure: spectral sequence}, 
\begin{enumerate}
\item the only nonzero differential from degree $2d-3$ to $2d-2$ is $d_2:E_2^{d-3,d}\to E_2^{d-1,d-1}$, 
and there is no nonzero differential from degree $2d-2$ to $2d-1$, 
so the spectral sequence in degree $2d-2$ degenerates at the $E_3$ page;  
\item similarly, the only nonzero differential from degree $2d-4$ to $2d-3$ is $d_2:E_2^{d-4,d}\to E_2^{d-2,d-1}$, 
so the spectral sequence in degree $2d-3$ degenerates at the $E_3$ page as well. 
\end{enumerate}
The observation (1) gives an exact sequence 
\begin{equation*}
E_2^{d-3,d} \stackrel{d_2}{\to} E_2^{d-1,d-1} \stackrel{cl}{\to} E_{\infty}^{2d-2} \stackrel{e_2}{\to} E_2^{d-2,d} \to 0,  
\end{equation*}
and (2) gives an exact sequence 
\begin{equation*}
E_{\infty}^{2d-4} \stackrel{e_4}{\to} E_2^{d-4,d} \stackrel{d_2}{\to} E_{2}^{d-2,d-1} \to  
E_{\infty}^{2d-3} \stackrel{e_3}{\to}  E_2^{d-3,d} \stackrel{d_2}{\to} E_2^{d-1,d-1}.  
\end{equation*}
Combining these two exact sequences gives the desired sequence. 
Since the Bloch-Ogus spectral sequence converges to the coniveau filtration, 
the first nontrivial step of the coniveau filtration is ${\ker}(e_i)$ and it is equal to $N^{d-i+1}H^{2d-i}(X, {\Z})$. 
\end{proof}

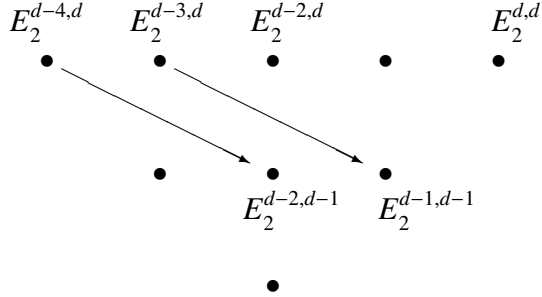
\begin{figure}
\setlength\unitlength{1mm}
\begin{picture}(100,50)(-20,0)
  \put(30,0){$\bullet$}
  \put(15,15){$\bullet$}
  \put(30,15){$\bullet$}
  \put(45,15){$\bullet$}
  \put(0,30){$\bullet$}
  \put(15,30){$\bullet$}
  \put(30,30){$\bullet$}
  \put(45,30){$\bullet$}
  \put(60,30){$\bullet$}
  
  \put(3,30){\vector(2,-1){25}}
  \put(18,30){\vector(2,-1){25}}
  
  \put(60,35){$E_2^{d, d}$}
  \put(45,10){$E_2^{d-1,d-1}$}
  \put(27,10){$E_2^{d-2,d-1}$}
  \put(-4,35){$E_2^{d-4,d}$}
  \put(12,35){$E_2^{d-3,d}$}
  \put(28,35){$E_2^{d-2,d}$}
\end{picture}
\caption{Bloch-Ogus spectral sequence in degree $\geq 2d-4$}
\label{figure: spectral sequence}
\end{figure}

Colliot-Th\'el\`ene and Voisin \cite{CT-V} studied the third line of \eqref{eqn:B-O deg -4 to -2} and found 
\begin{equation}\label{eqn:H-cohomology and Z2 I}
{}_{n}Z_2(X) \simeq {}_{n}H^{d-2}(X, {\HdZ}). 
\end{equation}
Combining this with \eqref{eqn:UCT homology edge} with $k=3$, they obtained the exact sequence 
\begin{equation}\label{eqn:H-cohomology and Z2 II}
 0 \to H^{d-3}(X, {\HdZ})/n \to H^{d-3}(X, {\Hdn}) \to {}_{n}Z_2(X) \to 0. 
 \end{equation}
  
In \S \ref{sec:T1(X)} we will give a cycle-theoretic interpretation of $H^{d-4}(X, {\Hdn})$. 
For completeness we also describe $H^{d-k}(X, {\Hdn})$ for $k\leq2$. 

\begin{proposition}\label{H(d-1,d)and H(d-2,d)}
(1) We have exact sequences 
\begin{equation}\label{eqn:H(d-2,d) mod n}
0 \to H^{d-2}(X, {\HH}^d({\Z}))/n \to H^{d-2}(X, {\HH}^d({\Z}/n)) \to {}_{n}H_{1}(X, {\Z}) \to 0, 
\end{equation}
\begin{equation*}\label{eqn:H(d-2,d) tor}
0 \to H^{d-2}(X, {\HH}^d({\Z}))\otimes{\QZ} \to H^{d-2}(X, {\HH}^d({\QZ})) \to {}_{tor}H_{1}(X, {\Z}) \to 0. 
\end{equation*}
When $X$ is rationally connected, 
we have $H^{d-2}(X, {\HH}^d({\QZ}))=0$.  

(2) We have 
$H^{d-1}(X, {\HH}^d({\Z}/n)) \simeq H_{1}(X, {\Z})/n$. 
This vanishes for rationally connected $X$. 
\end{proposition}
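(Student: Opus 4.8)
The plan is to read off both parts from the shape of the Bloch--Ogus spectral sequence in total degree $2d-1$, combined with Poincar\'e duality and the universal coefficient sequence \eqref{eqn:UCT homology edge}. First I would isolate the corner of the spectral sequence in that degree: among the terms $E_2^{p,q}=H^p(X,{\HH}^q(A))$ with $p+q=2d-1$, the constraints $p\leq q\leq d$ force $(p,q)=(d-1,d)$, so $E_2^{d-1,d}$ is the only nonzero contribution. Since ${\HH}^q(A)=0$ for $q>d$ and $H^p(X,{\HH}^q(A))=0$ for $p>q$, neither an incoming differential $d_r\colon E_r^{d-1-r,d+r-1}\to E_r^{d-1,d}$ nor an outgoing differential $d_r\colon E_r^{d-1,d}\to E_r^{d-1+r,d-r+1}$ can be nonzero, so $E_2^{d-1,d}=E_\infty^{d-1,d}$ and $H^{2d-1}(X,A)\simeq H^{d-1}(X,{\HH}^d(A))$ for every coefficient group $A$. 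Poincar\'e duality on the closed oriented $2d$-manifold $X$ gives $H^{2d-1}(X,A)\simeq H_1(X,A)$. For $A={\Z}$ this yields the identification $H^{d-1}(X,{\HH}^d({\Z}))\simeq H_1(X,{\Z})$, and for $A={\Z}/n$, using the universal coefficient theorem in homology together with $H_0(X,{\Z})={\Z}$ being free, it gives $H^{d-1}(X,{\HH}^d({\Z}/n))\simeq H_1(X,{\Z}/n)\simeq H_1(X,{\Z})/n$, which is part (2). As rationally connected varieties are simply connected, $H_1(X,{\Z})=0$ and this group vanishes.

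For the first sequence of part (1) I would feed the identification $H^{d-1}(X,{\HH}^d({\Z}))\simeq H_1(X,{\Z})$ into \eqref{eqn:UCT homology edge} with $k=2$, which reads $0\to H^{d-2}(X,{\HH}^d({\Z}))/n\to H^{d-2}(X,{\HH}^d({\Z}/n))\to {}_{n}H^{d-1}(X,{\HH}^d({\Z}))\to 0$, and replace the rightmost term by ${}_{n}H_1(X,{\Z})$ (the $n$-torsion subgroups correspond under the isomorphism).

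The torsion sequence \eqref{eqn:H(d-2,d) tor} I would obtain by passing to the direct limit over $n$, ordered by divisibility, of the first one. Here one uses that $\varinjlim_n M/n=M\otimes{\QZ}$, that ${\HH}^d({\QZ})=\varinjlim_n {\HH}^d({\Z}/n)$ so that $\varinjlim_n H^{d-2}(X,{\HH}^d({\Z}/n))=H^{d-2}(X,{\HH}^d({\QZ}))$ (Zariski sheaf cohomology on the Noetherian space $X_{Zar}$ commutes with filtered colimits), and that $\varinjlim_n {}_{n}H_1(X,{\Z})={}_{tor}H_1(X,{\Z})$; exactness of $\varinjlim$ then produces the sequence. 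The one point needing care is compatibility of transition maps: the maps ${\Z}/n\to{\Z}/nm$ (multiplication by $m$) fit into a morphism of the short exact sheaf sequences \eqref{eqn: Hsheaf torsion-free} whose left and middle verticals are $\mathrm{id}$ and multiplication by $m$ respectively, so the induced maps on the left term are $[x]\mapsto[mx]$ (realizing $M\otimes{\QZ}$) while the maps ${}_{n}H_1\to{}_{nm}H_1$ are the inclusions (realizing ${}_{tor}H_1$), exactly as needed.

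Finally, for the vanishing $H^{d-2}(X,{\HH}^d({\QZ}))=0$ when $X$ is rationally connected, I would use the torsion sequence just established: since $H_1(X,{\Z})=0$, it collapses to $H^{d-2}(X,{\HH}^d({\QZ}))\simeq H^{d-2}(X,{\HH}^d({\Z}))\otimes{\QZ}$. By the last line of \eqref{eqn:B-O deg -4 to -2} in Proposition \ref{prop:B-O deg -4 to -2}, $H^{d-2}(X,{\HH}^d({\Z}))=\coker\!\big(cl\colon{\NS}\to H^{2d-2}(X,{\Z})\big)$, which is finitely generated. I expect the main obstacle to be showing this cokernel is torsion, i.e.\ that $cl\otimes{\Q}$ is surjective; this is the only step requiring genuine geometric input. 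For rationally connected $X$ one has $h^{2,0}(X)=0$, so $H^2(X,{\Q})=H^2(X,{\Q})\cap H^{1,1}$ is spanned by divisor classes by the Lefschetz $(1,1)$ theorem; applying the hard Lefschetz isomorphism $L^{d-2}\colon H^2(X,{\Q})\to H^{2d-2}(X,{\Q})$, whose operator $L$ is cup product with an ample (algebraic) class, shows that every rational class in $H^{2d-2}(X,{\Q})$ is a combination of classes of algebraic $1$-cycles. Hence $\coker(cl)$ is finite and $H^{d-2}(X,{\HH}^d({\Z}))\otimes{\QZ}=0$.
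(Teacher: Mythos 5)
Your argument is correct and follows the paper's own route for the main content: degeneration of the Bloch--Ogus spectral sequence in total degree $\geq 2d-1$ gives $H^{d-1}(X,{\HH}^d({\Z}))\simeq H^{2d-1}(X,{\Z})\simeq H_1(X,{\Z})$ and $H^{d}(X,{\HH}^d({\Z}))\simeq{\Z}$, and the two exact sequences of part (1) then come from \eqref{eqn:UCT homology edge} with $k=2$ and a direct limit over $n$; your derivation of part (2) via the ${\Z}/n$-coefficient spectral sequence plus Poincar\'e duality and the homological universal coefficient theorem is an equivalent packaging of the paper's use of \eqref{eqn:UCT homology edge} with $k=1$ together with the torsion-freeness of $H^{d}(X,{\HH}^d({\Z}))\simeq{\Z}$. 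The one genuine divergence is the final vanishing statement: the paper simply cites \cite{CT-V}, Proposition 3.3, for $H^{d-2}(X,{\HH}^d({\Z}))\otimes{\QZ}=0$ on rationally connected $X$, whereas you prove it from scratch by identifying $H^{d-2}(X,{\HdZ})$ with $\coker(cl)$ via the last line of \eqref{eqn:B-O deg -4 to -2} and showing $cl\otimes{\Q}$ is surjective using $h^{2,0}=0$, the Lefschetz $(1,1)$ theorem and hard Lefschetz. That argument is sound (the cokernel is finitely generated and torsion, hence finite, hence killed by $\otimes{\QZ}$) and has the advantage of making the proposition self-contained; the citation has the advantage of brevity and of covering the more general hypothesis (${\CH}_0$ supported in small dimension) under which Colliot-Th\'el\`ene--Voisin establish the vanishing.
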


\begin{proof}
These are consequences of \eqref{eqn:UCT homology edge} with $k=1, 2$ 
and degeneration of the Bloch-Ogus spectral sequence in degree $\geq 2d-1$, which gives 
\begin{equation*}
H^{d-1}(X, {\HH}^d({\Z})) \simeq H^{2d-1}(X, {\Z}) \simeq H_{1}(X, {\Z}), 
\end{equation*}
\begin{equation*}
H^{d}(X, {\HH}^d({\Z})) \simeq H^{2d}(X, {\Z}) \simeq {\Z}.
\end{equation*}
When $X$ is rationally connected, it is simply connected and so $H_{1}(X, {\Z})=0$. 
Moreover, $H^{d-2}(X, {\HH}^d({\Z}))\otimes{\QZ}$ is zero by \cite{CT-V} Proposition 3.3. 
\end{proof}

If Voisin's conjecture (cf. \cite{Vo3}) that $Z_2(X)=0$ for rationally connected $X$ holds true, 
we have $H^{d-2}(X, {\HH}^d({\Z}))=0$ for such $X$ by \eqref{eqn:H-cohomology and Z2 I}. 
Then $H^{d-2}(X, {\HH}^d({\Z}/n))$ would also vanish by \eqref{eqn:H(d-2,d) mod n}.

We go back to observing the exact sequence \eqref{eqn:B-O deg -4 to -2}. 
The Griffiths group ${\Griff}$ is the kernel of the cycle map $cl$ defined on ${\NS}$ 
and hence equals to the image of $d_2:H^{d-3}(X, {\HdZ}) \to {\NS}$.  
As in \S \ref{sec:intro}, we denote  
\begin{equation}\label{eqn:Lambda(X)}
\Lambda(X)  :=  H^{2d-3}(X, {\Z})/N^{d-2}H^{2d-3}(X, {\Z}). 
\end{equation}
This is a finitely generated abelian group 
and also isomorphic to  
\begin{equation*}
\begin{split}
& {\rm Im}(\: e_3: H^{2d-3}(X, {\Z}) \to H^{d-3}(X, {\HdZ}) \: )  \\ 
& =   {\ker} (\: d_2: H^{d-3}(X, {\HdZ}) \to {\NS} \: ) 
\end{split} 
\end{equation*}
by \eqref{eqn:B-O deg -4 to -2}. 
Looking at the second to third line of \eqref{eqn:B-O deg -4 to -2}, we obtain the following description of ${\Griff}$.

\begin{proposition}\label{prop:Griff1 and H-cohomology}
We have an exact sequence 
\begin{equation}\label{eqn:Griff1}
0 \to \Lambda(X) \to H^{d-3}(X, {\HdZ}) \to {\Griff} \to 0. 
\end{equation}
In particular, when $H^{2d-3}(X, {\Z})=N^{d-2}H^{2d-3}(X, {\Z})$ holds, we have 
\begin{equation*}
{\Griff} \simeq H^{d-3}(X, {\HdZ}). 
\end{equation*}
\end{proposition}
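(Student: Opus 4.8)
The plan is to read off the asserted short exact sequence directly from the central portion of the long exact sequence \eqref{eqn:B-O deg -4 to -2} of Proposition \ref{prop:B-O deg -4 to -2}, invoking only exactness at the two relevant spots together with the cycle-theoretic identifications already recorded above. Concretely, I would focus on the four-term stretch
\[
H^{2d-3}(X, {\Z}) \stackrel{e_3}{\to} H^{d-3}(X, \HdZ) \stackrel{d_2}{\to} \NS \stackrel{cl}{\to} H^{2d-2}(X, {\Z}).
\]

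First I would establish surjectivity onto the Griffiths group. Exactness at $\NS$ gives $\im(d_2) = \ker(cl)$; since $cl$ is the cycle map and $\Griff$ is by definition the subgroup of $1$-cycles homologous to zero modulo algebraic equivalence, $\ker(cl) = \Griff$, so $d_2$ induces a surjection $H^{d-3}(X, \HdZ) \twoheadrightarrow \Griff$. Next I would identify its kernel: exactness at $H^{d-3}(X, \HdZ)$ gives $\ker(d_2) = \im(e_3) \cong H^{2d-3}(X, {\Z})/\ker(e_3)$, and Proposition \ref{prop:B-O deg -4 to -2} pins down $\ker(e_3)$ as the coniveau subgroup $N^{d-2}H^{2d-3}(X, {\Z})$. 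By the definition \eqref{eqn:Lambda(X)} this quotient is exactly $\Lambda(X)$. Splicing the two facts yields
\[
0 \to \Lambda(X) \to H^{d-3}(X, \HdZ) \to \Griff \to 0,
\]
and the final assertion follows since the hypothesis $H^{2d-3}(X, {\Z}) = N^{d-2}H^{2d-3}(X, {\Z})$ forces $\Lambda(X)=0$, making $d_2$ an isomorphism.

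I do not expect any genuine obstacle here: all the substance — the degeneration of the spectral sequence at the $E_3$ page in these degrees, the Bloch-Ogus identification $\NS \cong H^{d-1}(X, {\HH}^{d-1}({\Z}))$ together with the fact that the relevant edge map is the cycle map, and the convergence to the coniveau filtration that identifies $\ker(e_3)$ with the correct step — has already been carried out in Proposition \ref{prop:B-O deg -4 to -2} and in the Bloch-Ogus theory recalled in Section \ref{sec:preliminaries}. What remains is pure diagram bookkeeping. The only point deserving a moment's attention is to confirm that it is the step $N^{d-2}$ (rather than $N^{d-1}$, which vanishes in this range) that appears as $\ker(e_3)$; this is precisely the coniveau index recorded in Proposition \ref{prop:B-O deg -4 to -2}.
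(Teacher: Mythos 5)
Your argument is correct and coincides with the paper's own derivation: the paper likewise reads the short exact sequence off the stretch $H^{2d-3}(X,\Z)\stackrel{e_3}{\to}H^{d-3}(X,\HdZ)\stackrel{d_2}{\to}\NS\stackrel{cl}{\to}H^{2d-2}(X,\Z)$ of \eqref{eqn:B-O deg -4 to -2}, identifying $\ker(d_2)=\im(e_3)\simeq H^{2d-3}(X,\Z)/N^{d-2}H^{2d-3}(X,\Z)=\Lambda(X)$ and $\im(d_2)=\ker(cl)={\Griff}$. Nothing is missing; all the substantive input is indeed already contained in Proposition \ref{prop:B-O deg -4 to -2}.
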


This is analogous to Bloch-Ogus' description (\cite{B-O}) of ${\rm Griff}^2(X)$ 
\begin{equation*}
0 \to H^3(X, {\Z})/N^1H^3(X, {\Z}) \to H^3_{nr}(X, {\Z}) \to {\rm Griff}^2(X) \to 0. 
\end{equation*}

It is known that the three terms in \eqref{eqn:Griff1} are all birationally invariant for smooth projective $X$. 
For $H^{d-3}(X, {\HdZ})$ this is proved in \cite{CT-V}; 
for the other two terms, this results from the corresponding blow-up formulae: 
\begin{equation*}
{\CH}_1(\tilde{X}) \simeq {\CH}_1(X) \oplus {\CH}_0(Y), 
\end{equation*}
\begin{equation*}
H^{2d-3}(\tilde{X}, {\Z}) \simeq H^{2d-3}(X, {\Z}) \oplus H^{2e-1}(Y, {\Z}),
\end{equation*}
where $\tilde{X}\to X$ is the blow-up along a smooth subvariety $Y\subset X$ of dimension $e$. 

\begin{corollary}
The group ${\Griff}$ is finitely generated if and only if $H^{d-3}(X, {\HdZ})$ is. 
\end{corollary}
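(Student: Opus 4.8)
The plan is to read off the statement directly from the short exact sequence \eqref{eqn:Griff1} established in Proposition \ref{prop:Griff1 and H-cohomology}, namely
\begin{equation*}
0 \to \Lambda(X) \to H^{d-3}(X, {\HdZ}) \to {\Griff} \to 0.
\end{equation*}
The key structural observation is that $\Lambda(X)$ is a \emph{finitely generated} abelian group, as already noted after \eqref{eqn:Lambda(X)}: it is a quotient of the finitely generated group $H^{2d-3}(X, {\Z})$, hence finitely generated itself. Finite generation is preserved under extensions, so the heart of the argument is the elementary fact that in a short exact sequence $0 \to A \to B \to C \to 0$ of abelian groups with $A$ finitely generated, the middle term $B$ is finitely generated if and only if the quotient $C$ is.

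First I would prove the ``if'' direction: assuming $H^{d-3}(X, {\HdZ})$ is finitely generated, the quotient ${\Griff}$ is finitely generated because any quotient of a finitely generated abelian group is finitely generated. This direction is immediate and requires nothing beyond the surjectivity already built into \eqref{eqn:Griff1}. Next I would prove the ``only if'' direction: assuming ${\Griff}$ is finitely generated, I choose finitely many elements of $H^{d-3}(X, {\HdZ})$ mapping onto a generating set of ${\Griff}$, adjoin to them a finite generating set of the image of $\Lambda(X)$, and check that these together generate the middle term. The verification is the standard diagram chase: given any $b \in H^{d-3}(X, {\HdZ})$, subtract off an integral combination of the lifts so that its image in ${\Griff}$ vanishes, placing the difference in the image of $\Lambda(X)$, which is then an integral combination of the remaining generators.

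I do not anticipate any genuine obstacle here, since the whole content is extension-closedness of the class of finitely generated abelian groups, together with the already-recorded finite generation of $\Lambda(X)$. The only point deserving a sentence of care is that the relevant property is finite generation of the \emph{group} $H^{d-3}(X, {\HdZ})$ as an abstract abelian group; since ${\Z}$ is Noetherian this is equivalent to finiteness of both its rank and its torsion subgroup, but I would not need to unpack it that far. If anything counts as ``the hard part,'' it is purely expository: making clear that the result is an immediate corollary of Proposition \ref{prop:Griff1 and H-cohomology} rather than requiring any new geometric input, so the proof should essentially consist of invoking \eqref{eqn:Griff1} and the finite generation of $\Lambda(X)$.
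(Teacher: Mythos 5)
Your proof is correct and is exactly the argument the paper intends: the corollary is read off from the short exact sequence \eqref{eqn:Griff1} together with the finite generation of $\Lambda(X)$ (a quotient of $H^{2d-3}(X,{\Z})$), using that finite generation passes to quotients and is closed under extensions. The paper gives no separate proof, so your write-up supplies precisely the omitted routine verification.
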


Bloch-Srinivas \cite{B-S} proved that 
when ${\CH}_0(X)$ is supported in dimension $\leq2$, ${\Griff}$ is a torsion group. 
Hence in that case, ${\Griff}$ is finite if and only if $H^{d-3}(X, {\HdZ})$ is. 
We remark that the result of Bloch-Srinivas can also be derived from \eqref{eqn:Griff1} and 
the fact that $H^{d-3}(X, {\HdZ})$ is torsion in that case (\cite{CT-V}), but this may be roundabout.

The assumption $H^{2d-3}(X, {\Z})=N^{d-2}H^{2d-3}(X, {\Z})$ in the second statement of 
Proposition \ref{prop:Griff1 and H-cohomology} might be hard to check, unless $H^{2d-3}(X, {\Z})=0$. 
For example, this holds for complete intersections of dimension $d\geq5$. 
When this assumption holds, we have an exact sequence 
\begin{equation*}
0 \to {\Griff}/n \to H^{d-3}(X, {\Hdn}) \to {}_{n}Z_2(X) \to 0 
\end{equation*}
by Colliot-Th\'el\`ene--Voisin's exact sequence \eqref{eqn:H-cohomology and Z2 II}. 

Taking the ${\rm Ext}({\Z}/n, \cdot )$ long exact sequence associated to \eqref{eqn:Griff1} 
(or equivalently, applying the snake lemma to the multiplication by $n$ on \eqref{eqn:Griff1}), 
we obtain a description of the torsion part of ${\Griff}$. 

\begin{proposition}\label{prop: mod n Griff}
We have exact sequences 
\begin{equation*}
\begin{split}
0 & \to {}_{n}\Lambda(X) \to {}_{n}H^{d-3}(X, {\HdZ}) \to {}_{n}{\Griff} \\ 
 &  \to \Lambda(X)/n \to H^{d-3}(X, {\HdZ})/n \to {\Griff}/n \to 0, 
\end{split}
\end{equation*}
\begin{equation*}\label{eqn:lambdainfty}
\begin{split}
0 & \to {}_{tor}\Lambda(X) \to {}_{tor}H^{d-3}(X, {\HdZ}) \to {}_{tor}{\Griff} \\ 
 &  \to \Lambda(X)\otimes{\QZ} \to H^{d-3}(X, {\HdZ})\otimes{\QZ} \to {\Griff}\otimes{\QZ} \to 0. 
\end{split}
\end{equation*}
\end{proposition}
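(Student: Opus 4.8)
The plan is to derive both exact sequences purely formally from the short exact sequence \eqref{eqn:Griff1} of Proposition \ref{prop:Griff1 and H-cohomology}, writing $A=\Lambda(X)$, $B=H^{d-3}(X,\HdZ)$ and $C=\Griff$ for brevity. No further geometry is needed: the content has already been packaged into \eqref{eqn:Griff1}, and what remains holds for an arbitrary short exact sequence $0\to A\to B\to C\to 0$ of abelian groups.

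First I would produce the first sequence by applying the snake lemma to the ladder obtained by multiplication by $n$,
\[
\begin{CD}
0 @>>> A @>>> B @>>> C @>>> 0 \\
@. @VVnV @VVnV @VVnV \\
0 @>>> A @>>> B @>>> C @>>> 0.
\end{CD}
\]
The kernels of the vertical maps are ${}_{n}A,{}_{n}B,{}_{n}C$ and their cokernels are $A/n,B/n,C/n$, so the snake lemma delivers precisely
\[
0 \to {}_{n}A \to {}_{n}B \to {}_{n}C \to A/n \to B/n \to C/n \to 0.
\]
Equivalently one applies $\operatorname{Ext}^{\ast}(\Z/n,-)$ to \eqref{eqn:Griff1}: from $0\to\Z\xrightarrow{n}\Z\to\Z/n\to 0$ one reads off $\hom(\Z/n,G)={}_{n}G$, $\operatorname{Ext}^{1}(\Z/n,G)=G/n$ and $\operatorname{Ext}^{\geq 2}(\Z/n,G)=0$, whence the long exact Ext-sequence collapses to the six terms above. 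Substituting $A,B,C$ gives the first assertion.

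For the second sequence I would pass to the direct limit over the positive integers ordered by divisibility. For $n\mid N$ the multiplication-by-$n$ ladder maps to the multiplication-by-$N$ ladder by the identity on the upper copy of \eqref{eqn:Griff1} and by multiplication by $N/n$ on the lower copy, a commutative morphism of ladders since $(N/n)\cdot n=N$. By naturality of the snake lemma these assemble the six-term sequences into a directed system whose transition maps are the inclusions ${}_{n}G\hookrightarrow{}_{N}G$ on the torsion terms and multiplication by $N/n$ on the quotient terms $G/n\to G/N$. Consequently $\varinjlim_{n}{}_{n}G={}_{tor}G$ and $\varinjlim_{n}G/n=G\otimes\QZ$, and since filtered colimits of abelian groups are exact, applying $\varinjlim_{n}$ termwise to the first sequence yields the second.

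The only real work here is this bookkeeping---verifying that the ladders form a directed system with the stated transition maps---rather than any substantive argument. I would also flag one pitfall: one cannot shortcut the second sequence by feeding $\QZ=\varinjlim\Z/n$ into $\operatorname{Ext}^{\ast}(-,G)$, since $\operatorname{Ext}$ does not commute with this colimit in its first variable; indeed $\hom(\QZ,G)$ is the Tate module and not ${}_{tor}G$, and a $\varprojlim^{1}$ term would intrude. The direct-limit route above is what makes the torsion and $\otimes\QZ$ terms come out correctly.
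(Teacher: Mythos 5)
Your proposal is correct and follows essentially the same route as the paper, which obtains both sequences by applying the $\operatorname{Ext}(\Z/n,\cdot)$ long exact sequence (equivalently, the snake lemma for multiplication by $n$) to \eqref{eqn:Griff1} and then passing to the direct limit over $n$ ordered by divisibility. Your explicit bookkeeping of the transition maps and the warning about $\operatorname{Ext}(\QZ,-)$ are sound but add nothing beyond what the paper's one-line argument already contains.
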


\begin{corollary}
The group ${}_{n}{\Griff}$ (resp.~${\Griff}/n$) is finite if and only if ${}_{n}H^{d-3}(X, {\HdZ})$ (resp.~$H^{d-3}(X, {\HdZ})/n$) is. 
\end{corollary}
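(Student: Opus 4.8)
The plan is to read off both equivalences directly from the six-term exact sequence of Proposition \ref{prop: mod n Griff}, using only the elementary fact that a group which is an extension of a finite group by a finite group is itself finite.

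First I would record the one external input needed, namely that $\Lambda(X)$, defined in \eqref{eqn:Lambda(X)}, is a finitely generated abelian group (as already noted in \S\ref{sec:Griff1(X)}). It follows immediately that both ${}_{n}\Lambda(X)$ and $\Lambda(X)/n$ are finite: the former is contained in the finite $n$-torsion subgroup of a finitely generated group, and the latter is the reduction modulo $n$ of a finitely generated group.

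For the statement about ${}_{n}\Griff$, I would isolate from the first exact sequence of Proposition \ref{prop: mod n Griff} the homomorphism $a:{}_{n}H^{d-3}(X,\HdZ)\to{}_{n}\Griff$, together with $b:{}_{n}\Griff\to\Lambda(X)/n$. By exactness $\ker(a)$ is a quotient of ${}_{n}\Lambda(X)$, hence finite, and $\im(a)=\ker(b)$. If ${}_{n}H^{d-3}(X,\HdZ)$ is finite, then $\im(a)=\ker(b)$ is finite and ${}_{n}\Griff/\ker(b)\cong\im(b)\subset\Lambda(X)/n$ is finite, so ${}_{n}\Griff$ is finite. Conversely, if ${}_{n}\Griff$ is finite then $\im(a)=\ker(b)$ is finite, and since $\ker(a)$ is finite as well, ${}_{n}H^{d-3}(X,\HdZ)$ is an extension of finite groups, hence finite.

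For the statement about $\Griff/n$, I would use the tail $\Lambda(X)/n\to H^{d-3}(X,\HdZ)/n\stackrel{c}{\to}\Griff/n\to0$ of the same sequence. Here $c$ is surjective and $\ker(c)$ is a quotient of the finite group $\Lambda(X)/n$, hence finite. Thus if $H^{d-3}(X,\HdZ)/n$ is finite then $\Griff/n=\im(c)$ is finite, while if $\Griff/n$ is finite then $H^{d-3}(X,\HdZ)/n$ is an extension of $\Griff/n$ by the finite $\ker(c)$, hence finite. No step presents a genuine obstacle; the only point requiring care is the bookkeeping of which arrows in the six-term sequence have finite kernel or image, which is entirely controlled by the finite generation of $\Lambda(X)$.
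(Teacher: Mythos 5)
Your proof is correct and follows exactly the route the paper intends: the corollary is read off from the six-term exact sequences of Proposition \ref{prop: mod n Griff} together with the fact, recorded earlier in \S\ref{sec:Griff1(X)}, that $\Lambda(X)$ is finitely generated, so ${}_{n}\Lambda(X)$ and $\Lambda(X)/n$ are finite. The bookkeeping of kernels and images is exactly as you describe, and nothing further is needed.
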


In the corollary we may also replace $H^{d-3}(X, {\HdZ})/n$ by $H^{d-3}(X, {\Hdn})$ 
thanks to the exact sequence \eqref{eqn:H-cohomology and Z2 II}.


\section{Torsion Abel-Jacobi kernel in ${\Griff}$}\label{sec:T1(X)}

Let $J^{2d-3}(X)$ be the intermediate Jacobian of $X$ in degree $2d-3$. 
The algebraic part $J^{2d-3}(X)_{alg}$ of $J^{2d-3}(X)$ is defined as 
the image of the Abel-Jacobi map $\lambda_{alg}: A_1(X)\to J^{2d-3}(X)$ from $A_1(X)\subset{\CH}_{1}(X)_{hom}$. 
This is an abelian variety, which corresponds to the weight $1$ sub ${\Q}$-Hodge structure 
$N^{d-2}H^{2d-3}(X, {\Q})$ of $H^{2d-3}(X, {\Q})$. 
Consider the quotient complex torus 
\begin{equation*}
J^{2d-3}(X)_{tr} := J^{2d-3}(X)/J^{2d-3}(X)_{alg}.
\end{equation*}
The Abel-Jacobi map $\lambda: {\CH}_{1}(X)_{hom}\to J^{2d-3}(X)$ from ${\CH}_{1}(X)_{hom}$ induces a homomorphism 
$\lambda_{tr}: {\Griff} \to J^{2d-3}(X)_{tr}$. 
We consider its restriction to the torsion part 
\begin{equation*}
{}_{tor}\lambda_{tr}: {}_{tor}{\Griff} \to {}_{tor}J^{2d-3}(X)_{tr}. 
\end{equation*}
The situation is summarized in the commutative diagram 
\begin{equation*}
\begin{CD}
0 @>>> {}_{tor}A_1(X)                 @>>> {}_{tor}{\CH}_{1}(X)_{hom} @>>>   {}_{tor}{\Griff} @>>> 0 \\ 
@. @V{}_{tor}\lambda_{alg} VV  @VV{}_{tor}\lambda V  @V{}_{tor}\lambda_{tr}VV \\ 
0 @>>> {}_{tor}J^{2d-3}(X)_{alg} @>>> {}_{tor}J^{2d-3}(X)              @>>>   {}_{tor}J^{2d-3}(X)_{tr} @>>> 0. 
\end{CD}
\end{equation*}
The two rows are exact by the divisibility of $A_1(X)$ and $J^{2d-3}(X)_{alg}$ respectively. 
The homomorphism ${}_{tor}\lambda_{alg}$ remains surjective 
because $A_{1}(X)$ is generated by abelian varieties via correspondences. 

Following Voisin \cite{Vo2}, we define 
\begin{equation*}
{\T} := {\rm Ker}({}_{tor}\lambda_{tr}) \subset {}_{tor}{\Griff}. 
\end{equation*}
As shown by Voisin, 
${\T}$ coincides with the image of ${\rm Ker}({}_{tor}\lambda)$ in ${\Griff}$,  
which is the definition stated in \S \ref{sec:intro}. 
This equality can be seen from the snake lemma applied to the above commutative diagram, which gives 
the short exact sequence 
\begin{equation}\label{eqn:two def of T1}
0 \to {}_{tor}{\rm Ker}(\lambda_{alg}) \to {}_{tor}{\rm Ker}(\lambda) \to {\T} \to {\rm Coker}({}_{tor}\lambda_{alg}) = 0.   
\end{equation}

Our main result is the following. 

\begin{theorem}\label{thm:main}
We have an exact sequence 
\begin{equation}\label{eqn:main thm 1}
0 \to {}_{tor}\Lambda(X) \to H^{d-4}(X, {\HdQZ})/H^{d-4}(X, {\HdZ})\otimes{\QZ} \to {\T} \to 0. 
\end{equation}
If ${\CH}_0(X)$ is supported in dimension $\leq3$, we have an exact sequence
\begin{equation*}\label{eqn:main thm 2}
0 \to {}_{tor}\Lambda(X) \to H^{d-4}(X, {\HdQZ}) \to {\T} \to 0. 
\end{equation*}
If ${\CH}_0(X)$ is supported in dimension $\leq2$, then $\Lambda(X)$ is finite, 
${\T}$ coincides with ${\Griff}$, and hence we have an exact sequence 
\begin{equation}\label{eqn:main thm 3}
0  \to \Lambda(X)  \to  H^{d-4}(X, {\HdQZ}) \to {\Griff} \to 0. 
\end{equation} 
\end{theorem}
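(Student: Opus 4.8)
The plan is to reduce all three statements to the torsion exact sequence of Proposition \ref{prop: mod n Griff}, after rewriting the middle term of \eqref{eqn:main thm 1} and then identifying a single connecting homomorphism with the Abel--Jacobi map.

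First I would rewrite the quotient appearing in \eqref{eqn:main thm 1}. Applying $\varinjlim_n$ to the mirror-edge sequence \eqref{eqn:UCT homology edge} with $k=4$, and using that Zariski cohomology commutes with filtered colimits of sheaves (so that $\varinjlim_n H^{d-4}(X,\Hdn)=H^{d-4}(X,\HdQZ)$, $\varinjlim_n H^{d-4}(X,\HdZ)/n = H^{d-4}(X,\HdZ)\otimes\QZ$, and $\varinjlim_n {}_nH^{d-3}(X,\HdZ)={}_{tor}H^{d-3}(X,\HdZ)$), I obtain the short exact sequence
\[
0 \to H^{d-4}(X,\HdZ)\otimes\QZ \to H^{d-4}(X,\HdQZ) \to {}_{tor}H^{d-3}(X,\HdZ) \to 0 .
\]
Thus the middle term of \eqref{eqn:main thm 1} is canonically ${}_{tor}H^{d-3}(X,\HdZ)$. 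The first four terms of the second sequence of Proposition \ref{prop: mod n Griff} read
\[
0 \to {}_{tor}\Lambda(X) \to {}_{tor}H^{d-3}(X,\HdZ) \xrightarrow{\;\pi\;} {}_{tor}\Griff \xrightarrow{\;\delta\;} \Lambda(X)\otimes\QZ ,
\]
so that $\im\pi=\ker\delta$ and $\ker\pi={}_{tor}\Lambda(X)$. Hence \eqref{eqn:main thm 1} is equivalent to the single assertion $\ker\delta=\T$, and everything comes down to identifying the connecting map $\delta$.

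The core step is therefore to show that $\delta$ coincides, up to sign, with the torsion transcendental Abel--Jacobi map ${}_{tor}\lambda_{tr}$. To set up the comparison I would first identify the target: since ${}_{tor}J^{2d-3}(X)=H^{2d-3}(X,\Z)\otimes\QZ$ and ${}_{tor}J^{2d-3}(X)_{alg}$ is the image of $N^{d-2}H^{2d-3}(X,\Z)\otimes\QZ$, tensoring the defining sequence $0\to N^{d-2}H^{2d-3}(X,\Z)\to H^{2d-3}(X,\Z)\to\Lambda(X)\to 0$ of \eqref{eqn:Lambda(X)} with $\QZ$ and comparing with the bottom row of the diagram in \S\ref{sec:T1(X)} yields a canonical isomorphism $\Lambda(X)\otimes\QZ \simeq {}_{tor}J^{2d-3}(X)_{tr}$. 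Both $\delta$ and ${}_{tor}\lambda_{tr}$ are then \emph{division by $n$} maps: for $\gamma\in{}_{tor}\Griff$ with $n\gamma=0$ one lifts $\gamma$ to $\tilde\gamma\in H^{d-3}(X,\HdZ)$ via Proposition \ref{prop:Griff1 and H-cohomology}, and $\delta(\gamma)$ is the class of $\tfrac1n(n\tilde\gamma)$ in $\Lambda(X)\otimes\QZ$, while on the cycle side one writes a representative of a multiple of $\gamma$ as a boundary and divides by $n$ to compute $\lambda_{tr}(\gamma)$. Matching these requires tracing the surjection $H^{d-3}(X,\HdZ)\twoheadrightarrow\Griff$ of Proposition \ref{prop:Griff1 and H-cohomology} through the Bloch--Ogus spectral sequence and comparing it with the Deligne-cycle-class/Abel--Jacobi formalism. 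This compatibility is the main obstacle; I would establish it by following the template of Colliot-Th\'el\`ene--Voisin. Granting it, $\ker\delta=\ker({}_{tor}\lambda_{tr})=\T$ by the very definition of $\T$, which proves \eqref{eqn:main thm 1}.

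Finally I would deduce the two remaining statements. When $\CH_0(X)$ is supported in dimension $\leq 3$, the Bloch--Srinivas decomposition of the diagonal, applied to $\mathcal H$-cohomology as in Colliot-Th\'el\`ene--Voisin, forces $H^{d-4}(X,\HdZ)$ to be torsion, so $H^{d-4}(X,\HdZ)\otimes\QZ=0$; feeding this into the short exact sequence of the first paragraph identifies the middle term of \eqref{eqn:main thm 1} with $H^{d-4}(X,\HdQZ)$ itself, giving the sequence asserted for $\CH_0(X)$ supported in dimension $\leq3$. When $\CH_0(X)$ is supported in dimension $\leq2$, Bloch--Srinivas gives that $\Griff$ is torsion and that $N^{d-2}H^{2d-3}(X,\Q)=H^{2d-3}(X,\Q)$; hence $\Lambda(X)$ is finite, so ${}_{tor}\Lambda(X)=\Lambda(X)$ and $\Lambda(X)\otimes\QZ=0$, while $J^{2d-3}(X)_{tr}$ is trivial, whence $\lambda_{tr}=0$ and $\T={}_{tor}\Griff=\Griff$. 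Combining these with the sequence just obtained yields \eqref{eqn:main thm 3}.
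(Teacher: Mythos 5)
Your reduction of the middle term of \eqref{eqn:main thm 1} to ${}_{tor}H^{d-3}(X,{\HdZ})$ via the direct limit of \eqref{eqn:UCT homology edge} with $k=4$ is exactly what the paper does at the end of its proof, and your treatment of the two special cases (torsionness of $H^{d-4}(X,{\HdZ})$ from \cite{CT-V}, torsionness of ${\Griff}$ and vanishing of $\Lambda(X)\otimes{\Q}$ and $J^{2d-3}(X)_{tr}$ from \cite{B-S} and \cite{CT-V}) matches the paper as well. The problem is the core step, which you correctly flag as ``the main obstacle'' but then do not actually carry out: you reduce everything to the claim that the connecting homomorphism $\delta\colon {}_{tor}{\Griff}\to\Lambda(X)\otimes{\QZ}$ of Proposition \ref{prop: mod n Griff} --- i.e.\ the ${\rm Ext}({\Z}/n,\cdot)$ connecting map of the sequence \eqref{eqn:Griff1} --- coincides with ${}_{tor}\lambda_{tr}$. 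This is precisely the identification that the paper explicitly declines to verify: Remark \ref{remark}(2) states that it ``seems plausible that the map $\lambda_n$ coincides with the connecting map in Proposition \ref{prop: mod n Griff}, but we have not checked this.'' So the single assertion your whole argument rests on is, by the author's own account, an open compatibility, and ``following the template of Colliot-Th\'el\`ene--Voisin'' is not a substitute for proving it.

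The paper avoids this by constructing a \emph{different} map $\lambda_n\colon {}_{n}{\Griff}\to\Lambda(X)/n$: it assembles the commutative diagram \eqref{key commutative diagram} out of the three instances of \eqref{eqn:UCT} along the terms of \eqref{eqn:B-O deg -4 to -2}, and defines $\lambda_n$ as the snake-lemma connecting map of its lower two rows. The genuine work there is the two commutativity statements --- that the Bockstein-type connecting map $\alpha$ of \eqref{eqn:Bockstein CD} is compatible with the edge morphism (via \cite{CT-S-S} Proposition 1) and with the $d_2$ differential (via a diagram chase in the exact couple, interpolating the relative cohomology group \eqref{eqn:center of commutativity}). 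The payoff is that this particular construction of $\lambda_\infty=\varinjlim_n\lambda_n$ literally coincides with Voisin's map $cl_{d-1,tors,tr}$, so the identification $\lambda_\infty={}_{tor}\lambda_{tr}$ becomes a citation to \cite{Vo2} Proposition 4.4 rather than something to be proved. If you want to keep your route through Proposition \ref{prop: mod n Griff}, you would additionally have to prove that its connecting map agrees with the paper's $\lambda_n$ (or directly with ${}_{tor}\lambda_{tr}$), which is an extra, nontrivial compatibility beyond anything established in the paper.
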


\begin{proof}
When ${\CH}_0(X)$ is supported in dimension $\leq3$, $H^{d-4}(X, {\HdZ})$ is torsion by \cite{CT-V}. 
Hence it is annihilated when tensored with ${\QZ}$. 
When ${\CH}_{0}(X)$ is supported in dimension $\leq2$, ${\Griff}$ is torsion by \cite{B-S},  
and also $H^{2d-3}(X, {\Q})=N^{d-2}H^{2d-3}(X, {\Q})$ 
because $H^{d-3}(X, \mathcal{H}^{d}({\Q}))=0$ by \cite{CT-V}. 
Hence $\Lambda(X)\otimes{\Q}=0$ and $J^{2d-3}(X)_{tr}=0$. 
It remains to prove \eqref{eqn:main thm 1}. 

We apply \eqref{eqn:UCT} and the universal coefficient theorem to the 2nd to 4th terms of \eqref{eqn:B-O deg -4 to -2} 
and its ${\Z}/n$-coefficients version. 
This gives 
\begin{equation}\label{key commutative diagram}
\begin{CD}
0 @>>>  H^{d-4}(X, {\HdZ})/n  @>>>  H^{d-4}(X, {\Hdn})  @>\alpha >>  {}_{n}H^{d-3}(X, {\HdZ})   @>>> 0  \\
@. @VVV  @Vd_2VV  @VV-{}_{n}d_2V \\ 
0 @>>>  H^{d-2}(X, {\HH}^{d-1}({\Z}))/n  @>>>  H^{d-2}(X, {\HH}^{d-1}({\Z}/n))   @>\alpha >>   {}_{n}{\NS} @>>> 0 \\
@. @V\psi/n VV  @V\psi_nVV  @VV-{}_{n}clV \\ 
0 @>>> H^{2d-3}(X, {\Z})/n @>>>    H^{2d-3}(X, {\Z}/n)   @>\beta >>   {}_{n}H^{2d-2}(X, {\Z}) @>>> 0, 
\end{CD}
\end{equation}
where all rows and the middle column are exact, and the other two columns are complex. 
The right column is (up to sign) restriction of the $5$th to $7$th terms of \eqref{eqn:B-O deg -4 to -2} to the $n$-torsion parts. 

Commutativity at the lower right, saying that 
the connecting map $\alpha$ of \eqref{eqn:Bockstein CD} with $(p, q)=(d-2, d-1)$ is translated to 
the Bockstein homomorphism $\beta$  
via the edge morphisms of the spectral sequence (up to sign), 
is essentially proved in \cite{CT-S-S} Proposition 1 
(replace $\mu_m$  by ${\Z}$, and the Gersten-Quillen complex by the Bloch-Ogus complex with coefficients in ${\Z}$). 
Commutativity at the upper right, 
namely compatibility of the connecting map $\alpha$ of \eqref{eqn:Bockstein CD} with the $d_2$ differential, 
holds generally. 
It can be checked in the following way (we refer to \cite{CT-H-K} \S 1 for the notation): 
(1) consider the multiplication-by-$n$ and reduction-to-${\Z}/n$-coefficients on the whole exact couple 
$(\underset{\to}{D}^{p,q}, \underset{\to}{E}^{p,q})$ 
with which the coniveau spectral sequence started; 
(2) interpolate  
\begin{equation}\label{eqn:center of commutativity}
{\varinjlim}_{\vec{Z}} H^{p+q+1}(X-Z_{p+3}, X-Z_{p+1})
\end{equation} 
into the relevant diagram  
\begin{equation*}
\xymatrix{
\underset{\to}{E}^{p,q} \ar[r]_k & \underset{\to}{D}^{p+1,q} \ar[r]_j  \ar@{-->}[rd] & \underset{\to}{E}^{p+1,q}  \ar[r]_k & \underset{\to}{D}^{p+2,q}  \\ 
 &                                     & \eqref{eqn:center of commutativity} \ar@{-->}[u] \ar@{-->}[rd]                                &                            \\ 
 & \underset{\to}{D}^{p+2,q-1} \ar[r]_j  \ar[uu]_i  & \underset{\to}{E}^{p+2,q-1} \ar[r]_k \ar@{-->}[u] & \underset{\to}{D}^{p+3,q-1} \ar[r]_j  \ar[uu]_i & \underset{\to}{E}^{p+3,q-1} 
}
\end{equation*}
(multiplication-by-$n$ and reduction-to-${\Z}/n$-coefficients are omitted); 
and then 
(3) run diagram chasing. 
The key point is that both the results of $d_2\circ\alpha$ and $-\alpha\circ d_2$ come from 
a common element of \eqref{eqn:center of commutativity} with ${\Z}$-coefficients which is to be multiplied by $n$. 

Now we apply the snake lemma to the lower two rows of \eqref{key commutative diagram}. 
The resulting connecting map 
${\ker}({}_{n}cl)\to{\coker}(\psi/n)$ is rewritten as  
\begin{equation*}
\lambda_n : {}_{n}{\Griff} \to \Lambda(X)/n. 
\end{equation*}
Then the upper side of \eqref{key commutative diagram} induces the commutative diagram 
\begin{equation*}\label{key commutative diagram II}
\begin{CD}
0 @>>>  H^{d-4}(X, {\HdZ})/n  @>>>  H^{d-4}(X, {\Hdn})  @>>>  {}_{n}H^{d-3}(X, {\HdZ})   @>>> 0  \\
@. @VVV  @Vd_2VV  @VV\pi_nV \\ 
0 @>>>  {\ker}(\psi/n)  @>>>  {\ker}(\psi_n)   @>>>   {\ker}(\lambda_n) @>>> 0.  
\end{CD}
\end{equation*}
Since the middle vertical is surjective by the exactness of the middle column of \eqref{key commutative diagram}, 
the right vertical $\pi_n$ is also surjective. 
Since $\pi_n$ is restriction of the map  
$d_2:H^{d-3}(X, {\HdZ}) \to {\NS}$ of \eqref{eqn:B-O deg -4 to -2} 
to the $n$-torsion part, 
its kernel is 
\begin{equation*}
{}_{n}{\ker}(\: d_2:H^{d-3}(X, {\HdZ}) \to {\NS}\: ) \simeq {}_{n}\Lambda(X).
\end{equation*}
Thus we obtain an exact sequence 
\begin{equation*}
0 \to {}_{n}\Lambda(X) \to {}_{n}H^{d-3}(X, {\HdZ}) \to {\ker}(\lambda_n) \to 0. 
\end{equation*}
Taking direct limit with respect to $n$, we have an exact sequence  
\begin{equation*}
0 \to {}_{tor}\Lambda(X) \to {}_{tor}H^{d-3}(X, {\HdZ}) \to {\ker}(\lambda_{\infty}) \to 0 
\end{equation*}
where 
\begin{equation*}
\lambda_{\infty} := \varinjlim_{n} \lambda_n : {}_{tor}{\Griff} \to \Lambda(X)\otimes{\QZ}. 
\end{equation*}
The construction of $\lambda_{\infty}$ as a connecting map coincides with that of Voisin's map $cl_{d-1,tors,tr}$ in \cite{Vo2} p.354 
(more precisely, its restriction to ${}_{tor}{\Griff}$). 
By \cite{Vo2} Proposition 4.4, we see that  $\lambda_{\infty}={}_{tor}\lambda_{tr}$ and so ${\ker}(\lambda_{\infty})={\T}$. 
Finally, our assertion follows by taking direct limit of \eqref{eqn:UCT homology edge} with $k=4$. 
\end{proof}

\begin{remark}\label{remark}
(1) This proof was inspired by the argument of Voisin \cite{Vo2} in the $H^4_{nr}$ case. 
On the other hand, if we add the present argument to Voisin's proof, we obtain an exact sequence 
\begin{equation*}
0 \to {}_{tor}(N^1H^5(X, {\Z})/N^2H^5(X, {\Z})) \to H^4_{nr}(X, {\QZ})/H^4_{nr}(X, {\Z})\otimes{\QZ} \to \mathcal{T}^3(X) \to 0. 
\end{equation*}
Since $H^5(X, {\Z})/N^1H^5(X, {\Z})$ has no torsion (\cite{CT-V}, \cite{BV3}), the first term is equal to 
${}_{tor}(H^5(X, {\Z})/N^2H^5(X, {\Z}))$.     
This gives a refinement of Voisin's result which was proved under the assumption that $H^5(X, {\Z})/N^2H^5(X, {\Z})$ has no torsion. 

(2) We can rewrite \eqref{eqn:main thm 1} in a form compatible with \eqref{eqn:Griff1}: 
\begin{equation*}
\xymatrix{
0 \ar[r] & {}_{tor}\Lambda(X) \ar[r] \ar@{^{(}->}[d]  & {}_{tor}H^{d-3}(X, {\HdZ}) \ar[r] \ar@{^{(}->}[d]   & {\T} \ar[r] \ar@{^{(}->}[d]  & 0 \\ 
0 \ar[r]   &     \Lambda(X) \ar[r]         &            H^{d-3}(X, {\HdZ}) \ar[r]          &           {\Griff} \ar[r]         &       0.
}
\end{equation*}
It seems plausible that the map $\lambda_n$ coincides with the connecting map in 
Proposition \ref{prop: mod n Griff}, but we have not checked this. 

(3) The exact sequence \eqref{eqn:main thm 3} also follows from \eqref{eqn:Griff1} 
and the vanishing $H^{d-i}(X, {\HH}^d({\Q}))=0$ for $i=3, 4$. 
\end{remark}

In the proof of Theorem \ref{thm:main} we also obtained a description of 
the kernel of the ``mod $n$'' transcendental Abel-Jacobi map 
$\lambda_n: {}_{n}{\Griff} \to \Lambda(X)/n$ in terms of  $H^{d-4}(X, {\Hdn})$.

\begin{corollary}\label{cor:interpretation of H(d-4,d)}
We have an exact sequence 
\begin{equation*}
0 \to {}_{n}\Lambda(X) \to H^{d-4}(X, {\Hdn})/(H^{d-4}(X, {\HdZ})/n) \to {\ker}(\lambda_n) \to 0. 
\end{equation*}
\end{corollary}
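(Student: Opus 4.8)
The plan is to deduce this corollary directly from the intermediate computations already assembled in the proof of Theorem \ref{thm:main}; no genuinely new ingredient is required beyond making the correct quotient identification explicit. The whole point is that the middle term $H^{d-4}(X, {\Hdn})/(H^{d-4}(X, {\HdZ})/n)$ is just an isomorphic repackaging of ${}_{n}H^{d-3}(X, {\HdZ})$, and the claimed sequence is the one obtained in that proof transported across this isomorphism.

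First I would recall the top row of the diagram \eqref{key commutative diagram}, which is nothing but the exact sequence \eqref{eqn:UCT} specialized to $(p,q)=(d-4,d)$:
\begin{equation*}
0 \to H^{d-4}(X, {\HdZ})/n \to H^{d-4}(X, {\Hdn}) \stackrel{\alpha}{\to} {}_{n}H^{d-3}(X, {\HdZ}) \to 0.
\end{equation*}
In particular the connecting map $\alpha$ descends to an isomorphism
\begin{equation*}
\bar{\alpha} : H^{d-4}(X, {\Hdn})/(H^{d-4}(X, {\HdZ})/n) \stackrel{\sim}{\to} {}_{n}H^{d-3}(X, {\HdZ}).
\end{equation*}
This is the only structural input, and it is already available from the proof of the main theorem.

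Next I would invoke the exact sequence
\begin{equation*}
0 \to {}_{n}\Lambda(X) \to {}_{n}H^{d-3}(X, {\HdZ}) \stackrel{\pi_n}{\to} {\ker}(\lambda_n) \to 0
\end{equation*}
established in the course of proving Theorem \ref{thm:main}, where $\pi_n$ is the restriction to the $n$-torsion part of the differential $d_2 : H^{d-3}(X, {\HdZ}) \to {\NS}$ of \eqref{eqn:B-O deg -4 to -2}, and ${}_{n}\Lambda(X)$ is its kernel. Transporting this sequence along $\bar{\alpha}$ yields
\begin{equation*}
0 \to {}_{n}\Lambda(X) \to H^{d-4}(X, {\Hdn})/(H^{d-4}(X, {\HdZ})/n) \to {\ker}(\lambda_n) \to 0,
\end{equation*}
which is the assertion. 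Here the left-hand map is the inclusion ${}_{n}\Lambda(X) \hookrightarrow {}_{n}H^{d-3}(X, {\HdZ})$ composed with $\bar{\alpha}^{-1}$, and the right-hand surjection is $\pi_n \circ \bar{\alpha}$, i.e.\ the map induced by $d_2$ on the quotient.

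I expect no substantive obstacle: the entire content of the corollary lies inside the proof of Theorem \ref{thm:main}, and the two short exact sequences glue along $\bar{\alpha}$ by the commutativity of the upper square used there. The only point requiring minor care is bookkeeping, namely keeping the identifications consistent so that the middle term is literally the stated quotient $H^{d-4}(X, {\Hdn})/(H^{d-4}(X, {\HdZ})/n)$ rather than its isomorphic copy ${}_{n}H^{d-3}(X, {\HdZ})$, and checking that the left-hand kernel really is ${}_{n}\Lambda(X)$ and not merely abstractly isomorphic to it.
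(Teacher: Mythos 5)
Your proposal is correct and matches the paper's (implicit) argument exactly: the paper states this corollary as an immediate byproduct of the proof of Theorem \ref{thm:main}, namely by combining the exact sequence $0 \to {}_{n}\Lambda(X) \to {}_{n}H^{d-3}(X, {\HdZ}) \to {\ker}(\lambda_n) \to 0$ obtained there with the isomorphism ${}_{n}H^{d-3}(X, {\HdZ}) \simeq H^{d-4}(X, {\Hdn})/(H^{d-4}(X, {\HdZ})/n)$ furnished by the top row of \eqref{key commutative diagram}. No gaps.
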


The referee pointed out that 
in some case, algebraic equivalence for torsion $1$-cycles with null Deligne class 
reduces to rational equivalence. 
Consider the condition 
\begin{equation}\label{eqn:Z-coniveau surjectivity}
\bigoplus_{W\in X^{(d-2)}} H^{1}(\tilde{W}, {\Z}) \to 
N^{d-2}H^{2d-3}(X, {\Z})/\textrm{torsion} 
\quad \textrm{is surjective}, 
\end{equation}
where $W$ runs through irreducible surfaces in $X$ 
and $\tilde{W}$ is a desingularization of $W$. 
As noticed by Grothendieck \cite{Gro2}, 
this seems a subtle problem while its ${\Q}$-coefficients version always holds.

\begin{proposition}\label{prop:alg=rat equiv}
Suppose that 
(i) ${\CH}_{0}(X)$ is supported in dimension $0$, 
(ii) $\Lambda(X)$ is torsion-free, and 
(iii) the condition \eqref{eqn:Z-coniveau surjectivity} holds. 
Then the map 
${}_{tor}{\rm Ker}(\lambda) \to {\T}$ 
in \eqref{eqn:two def of T1} is isomorphic. 
Thus in this case we have 
\begin{equation*}
H^{d-4}(X, {\HdQZ}) \simeq {\Griff} = {\T} \simeq {}_{tor}{\rm Ker}(\lambda), 
\end{equation*}
the last being a subgroup of ${}_{tor}{\CH}_1(X)_{hom}$. 
\end{proposition}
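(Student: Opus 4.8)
The plan is to reduce the assertion to a Roitman-type theorem for $A_1(X)$. By the exact sequence \eqref{eqn:two def of T1}, the map ${}_{tor}{\rm Ker}(\lambda) \to {\T}$ is always surjective with kernel ${}_{tor}{\rm Ker}(\lambda_{alg})$, so it is an isomorphism exactly when $\lambda_{alg}\colon A_1(X) \to J^{2d-3}(X)_{alg}$ is injective on torsion, surjectivity on torsion being already known (since $A_1(X)$ is generated by abelian varieties via correspondences). As $J^{2d-3}(X)_{alg}$ is divisible, the snake lemma applied to multiplication by $n$ on $0 \to {\rm Ker}(\lambda_{alg}) \to A_1(X) \to J^{2d-3}(X)_{alg} \to 0$ shows this is equivalent to ${}_n\lambda_{alg}\colon {}_nA_1(X) \to {}_nJ^{2d-3}(X)_{alg}$ being an isomorphism for every $n$. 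Thus the whole content is the integral injectivity ${}_n{\rm Ker}(\lambda_{alg}) = 0$.

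I would then set up a geometric model from surfaces. Every class in $A_1(X)$ is a proper pushforward of a divisor class in ${\rm Pic}^0(\tilde{W}) = A_1(\tilde{W})$, for $\tilde{W}$ a desingularization of a surface $W \subset X$, and $\lambda_{alg}$ is compatible with the Gysin maps $H^1(\tilde{W}, {\Z}) \to H^{2d-3}(X, {\Z})$ on the associated abelian varieties; crucially, on each surface the Abel--Jacobi map ${\rm Pic}^0(\tilde{W}) \to J^1(\tilde{W}) = {\rm Pic}^0(\tilde{W})$ is the identity, so the surfaces contribute no torsion Abel--Jacobi kernel of their own. Using the decomposition of the diagonal afforded by (i) I would realize $A_1(X)$ by finitely many surfaces, giving a surjection $p\colon B := \bigoplus_i {\rm Pic}^0(\tilde{W}_i) \twoheadrightarrow A_1(X)$ from an abelian variety, with $\Phi := \lambda_{alg}\circ p\colon B \to J^{2d-3}(X)_{alg}$ a homomorphism of abelian varieties. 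Hypothesis (ii) ensures $N^{d-2}H^{2d-3}(X,{\Z})$ is saturated, so the lattice of $J^{2d-3}(X)_{alg}$ is $N^{d-2}H^{2d-3}(X,{\Z})/\mathrm{torsion}$ and ${}_nJ^{2d-3}(X)_{alg}$ is its reduction mod $n$; hypothesis \eqref{eqn:Z-coniveau surjectivity} then says exactly that the lattice map underlying $\Phi$ is surjective. A small computation identifies the group of components of $\ker\Phi$ with the cokernel of this lattice map, so \eqref{eqn:Z-coniveau surjectivity} forces $\ker\Phi$ to be connected, i.e.\ $\ker\Phi = \mathcal{A} := (\ker\Phi)^0$, an abelian subvariety of $B$.

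Injectivity on torsion is then a short chase: given $\alpha \in {}_nA_1(X)$ with $\lambda_{alg}(\alpha)=0$, lift it to $\beta \in B$ with $p(\beta)=\alpha$; then $\beta \in \ker\Phi = \mathcal{A}$, so once we know $\mathcal{A} \subseteq \ker p$ we get $\alpha = p(\beta) = 0$. The main obstacle is precisely this inclusion $\mathcal{A} \subseteq \ker p$, namely that the abelian variety $\mathcal{A}$ maps to zero in $A_1(X)$ — equivalently, that a family of $1$-cycles parametrized by $\mathcal{A}$ with identically vanishing Abel--Jacobi invariant is rationally trivial. This is a ``Roitman theorem for algebraic equivalence''; its ${\Q}$-coefficient shadow is the automatic surjectivity of the rational coniveau map, while the integral statement is what genuinely requires the diagonal decomposition from (i) together with the integral surjectivity \eqref{eqn:Z-coniveau surjectivity}. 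Everything else in the argument is formal once the surface model and the three hypotheses are in hand.

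Finally I would assemble the displayed chain. Hypothesis (i) puts us in the last case of Theorem \ref{thm:main}, so $\Lambda(X)$ is finite and ${\T} = {\Griff}$; with (ii), finiteness and torsion-freeness give $\Lambda(X)=0$, whence \eqref{eqn:main thm 3} collapses to $H^{d-4}(X, {\HdQZ}) \simeq {\Griff} = {\T}$. Composing with the isomorphism ${\T} \simeq {}_{tor}{\rm Ker}(\lambda)$ established above, and observing that ${}_{tor}{\rm Ker}(\lambda)$ is by construction a subgroup of ${}_{tor}{\CH}_1(X)_{hom}$, yields the asserted identification.
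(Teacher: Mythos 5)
Your opening reduction is correct and matches the paper: by \eqref{eqn:two def of T1} the statement is equivalent to ${}_{tor}{\rm Ker}(\lambda_{alg})=0$, and surjectivity of ${}_{tor}\lambda_{alg}$ is already known. But the geometric model you build on top of this has two genuine gaps. First, the surjection $p\colon B=\bigoplus_i{\rm Pic}^0(\tilde W_i)\twoheadrightarrow A_1(X)$ from finitely many surfaces is asserted, not proved. Neither hypothesis (iii) nor the diagonal decomposition from (i) gives this: (iii) is a statement about the finitely generated lattice $N^{d-2}H^{2d-3}(X,\Z)/\mathrm{torsion}$, so it only yields surjectivity onto the $n$-torsion points of $J^{2d-3}(X)_{alg}$, not onto $A_1(X)$; and the decomposition $N\Delta_X\sim\Gamma_1+\Gamma_2$ sends $1$-cycles to $0$-cycles on a desingularized \emph{divisor} $\tilde D$, whose ${\rm CH}_0$ is not representable in general, so no weak representability of $A_1(X)$ falls out. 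Second, the step you yourself flag as ``the main obstacle,'' namely $\mathcal{A}=\ker\Phi\subseteq\ker p$, is left unproved, and it is in fact too strong: since $\ker\lambda_{alg}=p(\ker\Phi)$, this inclusion would force $\lambda_{alg}$ to be injective on all of $A_1(X)$, whereas the hypotheses only deliver injectivity on torsion (the paper leaves open that ${\rm Ker}(\lambda_{alg})$ is a nonzero uniquely divisible group). So the argument is incomplete precisely at its crux.

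The paper's proof avoids both problems by splitting the statement into two independent halves that never require a surjection onto $A_1(X)$. Conditions (ii) and (iii) are used to show ${\rm Ker}(\lambda_{alg})$ is \emph{divisible}: since $A_1(X)$ is divisible, it suffices that ${}_nA_1(X)\to{}_nJ^{2d-3}(X)_{alg}$ be surjective, which follows once some $W=\sum_iW_i$ has $\oplus_iH^1(\tilde W_i,\Z)$ surjecting onto $N^{d-2}H^{2d-3}(X,\Z)/\mathrm{torsion}$ --- exactly the lattice-level surjectivity that (iii) provides and that (ii) identifies with the lattice of $J^{2d-3}(X)_{alg}$. Independently, condition (i) is used to show ${}_{tor}{\rm Ker}(\lambda)$ is killed by the integer $N$ of the diagonal decomposition: $\Gamma_2$ annihilates $1$-cycles, and $\Gamma_1$ factors through pullback to $\tilde D$, where a torsion $1$-cycle with null Deligne class becomes a torsion $0$-cycle with null Albanese invariant, hence is zero by Roitman's theorem. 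A divisible group whose torsion has bounded exponent has trivial torsion, and the conclusion follows. Your proposal never invokes Roitman's theorem, which is the essential input extracted from (i); to repair your argument you would in effect have to reinstate both of these steps.
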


\begin{proof}
The second assertion follows from the first and Theorem \ref{thm:main}. 
By \eqref{eqn:two def of T1}, the first assertion is equivalent to 
the vanishing of ${}_{tor}{\rm Ker}(\lambda_{alg})$. 
We first show that 
the conditions (ii) and (iii) imply that ${\rm Ker}(\lambda_{alg})$ is a divisible group. 
Since $A_{1}(X)$ is divisible, it suffices to prove that 
${}_{n}A_{1}(X)\to {}_{n}J^{2d-3}(X)_{alg}$ is surjective for every $n>0$. 
As in the proof of \cite{Blo1} Lemma 1.4, 
this follows if we could find a (possibly reducible) surface $W\subset X$ such that 
${}_{n}{\rm Pic}^{0}(\tilde{W}) \to {}_{n}J^{2d-3}(X)_{alg}$ is surjective 
where $\tilde{W}$ is a desingularization of $W$, 
for this homomorphism is factorized as 
\begin{equation*}
{}_{n}{\rm Pic}^{0}(\tilde{W}) = {}_{n}A_{1}(\tilde{W}) 
\to {}_{n}A_{1}(X) \to {}_{n}J^{2d-3}(X)_{alg}. 
\end{equation*}
Since ${\rm Pic}^{0}(\tilde{W}) \to J^{2d-3}(X)_{alg}$ is defined by  
the morphism of ${\Z}$-Hodge structure of weight $1$ 
\begin{equation*}
j_{\ast} : H^{1}(\tilde{W}, {\Z}) \to N^{d-2}H^{2d-3}(X, {\Q})\cap H^{2d-3}(X, {\Z}) 
\end{equation*}
where $j:\tilde{W}\to X$, 
our assertion holds if $j_{\ast}$ is surjectve. 
Note that $j_{\ast}$ factors through 
\begin{equation*}
H^{1}(\tilde{W}, {\Z}) \to 
N^{d-2}H^{2d-3}(X, {\Z})/\textrm{torsion}  \subset 
N^{d-2}H^{2d-3}(X, {\Q})\cap H^{2d-3}(X, {\Z}). 
\end{equation*}
By the condition (ii), the second inclusion is equality. 
By the condition (iii), we can find finitely many irreducible surfaces $W_{1}, \cdots, W_{k}\subset X$ 
such that $\oplus_{i}H^{1}(\tilde{W}_{i}, {\Z})$ maps surjectively onto $N^{d-2}H^{2d-3}(X, {\Z})/\textrm{torsion}$. 
It now suffices to take $W=\sum_{i}W_{i}$. 

We next show that the condition (i) implies that 
${}_{tor}{\rm Ker}(\lambda_{alg})$ is annihilated by some natural number. 
We prove this for ${}_{tor}{\rm Ker}(\lambda)$. 
By the Bloch-Srinivas principle \cite{B-S}, we have a decomposition of the diagonal 
\begin{equation*}
N\Delta_{X} \sim \Gamma_1 + \Gamma_2 \quad  \in {\CH}^d(X\times X) 
\end{equation*}
for some $N>0$, 
where $\Gamma_1$ is supported on $D\times X$ for some divisor $D\subset X$ 
and $\Gamma_2$ is supported on $X\times \{ p_1, \cdots, p_k \}$ 
for some points $p_1, \cdots, p_k \in X$. 
The correspondence by $\Gamma_2$ clearly annihilates every $1$-cycle. 
On the other hand, if $\tilde{D}\to D$ is a desingularization of $D$, 
the correspondence by $\Gamma_1$ factors through the pullback to $\tilde{D}$. 
The pullback of a torsion $1$-cycle with null Deligne class to $\tilde{D}$ 
is a torsion $0$-cycle with null Albanese invariant, 
which must be rationally equivalent to zero by a theorem of Roitman \cite{Ro}. 
Hence $N\cdot{}_{tor}{\rm Ker}(\lambda)=0$. 
We thus obtain ${}_{tor}{\rm Ker}(\lambda_{alg})=0$. 
\end{proof}

For example, this proposition applies to Fano complete intersections of dimension $d\geq5$ 
for they are rationally connected and have $H^{2d-3}(X, {\Z})=0$. 
We note that to deduce divisibility of ${\rm Ker}(\lambda_{alg})$, 
one may also replace the conditions (ii), (iii) by 
(iv) $N^{d-2}H^{2d-3}(X, {\Q})=0$ because ${\rm Ker}(\lambda_{alg})=A_{1}(X)$ in that case.


\section{Torsion cycles in $A^k(X)$}\label{sec:nAk(X)}

The previous sections dealt with ${\Griff}$ and its torsion subgroup. 
In this section, as another application of \eqref{eqn:UCT}, 
we give a description of the torsion part of $A^p(X)$ in terms of ${\HH}$-cohomology for arbitrary $p$. 
This is independent of \S \ref{sec:Griff1(X)} and \S \ref{sec:T1(X)}. 
Let $\mathcal{K}_{p}$ and $\mathcal{K}_{p}^{M}$ be the Zariski sheaves on $X$ 
associated to the Quillen $K$-theory and the Milnor $K$-theory respectively. 
As a consequence of the Bloch-Kato conjecture proved by Rost-Voevodsky \cite{Voe} and the work of Kerz \cite{Ke},  
we have an isomorphism $\mathcal{K}_{p}^{M}/n\simeq {\HH}^p({\Z}/n)$ for every $n>1$ 
(see \cite{CT-V} p.745 and \cite{BV3} p.9). 

\begin{theorem}\label{prop: torsion A^p(X) and H-cohomology}
Let $X$ be a smooth complex projective variety. 
We have exact sequences 
\begin{equation*}
0 \to H^{p-1}(X, \mathcal{K}_p)/n \to H^{p-1}(X, {\HH}^p({\Z}))/n \to {}_{n}A^p(X) \to 0, 
\end{equation*}
\begin{equation*}\label{eqn:n_A and H-cohomology II}
0 \to H^{p-1}(X, \mathcal{K}_p)\otimes{\QZ} \to H^{p-1}(X, {\HH}^p({\Z}))\otimes{\QZ} \to {}_{tor}A^p(X) \to 0. 
\end{equation*}
The same exact sequences with $\mathcal{K}_{p}$ replaced by $\mathcal{K}_{p}^{M}$ also hold. 
\end{theorem}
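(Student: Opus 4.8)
The plan is to produce the two sequences from the universal-coefficient-type sequence \eqref{eqn:UCT} by specializing to the edge position of the Bloch-Ogus spectral sequence along the diagonal, exactly where $\mathcal{H}$-cohomology meets Chow groups. Taking $(p,q)=(p-1,p)$ in \eqref{eqn:UCT} gives the exact sequence
\begin{equation*}
0 \to H^{p-1}(X, {\HH}^p({\Z}))/n \to H^{p-1}(X, {\HH}^p({\Z}/n)) \to {}_{n}H^{p}(X, {\HH}^p({\Z})) \to 0.
\end{equation*}
By Bloch-Ogus we have $H^{p}(X, {\HH}^p({\Z}))\simeq{\rm NS}^p(X)={\CH}^p(X)/A^p(X)$, whose $n$-torsion is ${}_{n}({\CH}^p(X)/A^p(X))$. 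So the third term is the $n$-torsion of the N\'eron-Severi quotient, and I want to convert this into ${}_{n}A^p(X)$ by sliding one diagonal step downward in the spectral sequence, replacing the group of cycles modulo algebraic equivalence by the group of cycles algebraically equivalent to zero.

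First I would identify the correct source of $A^p(X)$. The standard description (going back to Bloch-Ogus and used throughout the paper, cf.~the treatment of ${\Griff}$ in \S\ref{sec:Griff1(X)}) is that the group $H^{p-1}(X, {\HH}^p({\Z}))$ surjects onto the kernel of the cycle map on ${\rm NS}^p(X)$ via a $d_2$-type differential, and more to the point, that the Quillen (or Milnor) $K$-theory sheaf controls the finer structure: there is the Gersten-type resolution relating $H^{p-1}(X,\mathcal{K}_p)$ and $H^{p-1}(X,{\HH}^p({\Z}))$, together with the fundamental identification of $A^p(X)$ with a cohomology group of $\mathcal{K}_p$. The key input is the isomorphism $\mathcal{K}_{p}^{M}/n\simeq {\HH}^p({\Z}/n)$ recorded just before the statement (a consequence of Bloch-Kato via Rost-Voevodsky and Kerz), which lets one pass freely between the $K$-theory sheaf and ${\HH}^p({\Z}/n)$ modulo $n$. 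So I would set up the exact sequence $0\to\mathcal{K}_p\stackrel{n}{\to}\mathcal{K}_p\to\mathcal{K}_p/n\to0$ (using torsion-freeness / the Gersten resolution), take cohomology, and compare the resulting long exact sequence with the one for ${\HH}^p({\Z})$ through the comparison map $\mathcal{K}_p\to{\HH}^p({\Z})$ induced by the cycle/Chern class. The relation ${\CH}^p(X)=H^{p}(X,\mathcal{K}_p)$ (Bloch's formula / Gersten) and $A^p(X)=\ker({\CH}^p\to{\rm NS}^p)$ then let me read off that the $n$-torsion connecting map lands in ${}_{n}A^p(X)$ rather than all of ${}_{n}{\rm NS}^p$.

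Concretely, the main step is a snake-lemma comparison between the mod-$n$ sequence for $\mathcal{K}_p$ and the mod-$n$ sequence \eqref{eqn:UCT} for ${\HH}^p({\Z})$, exploiting that both have the same middle term $H^{p-1}(X,\mathcal{K}_p/n)\simeq H^{p-1}(X,{\HH}^p({\Z}/n))$ by the Bloch-Kato isomorphism. The image of $H^{p-1}(X,\mathcal{K}_p)/n$ inside $H^{p-1}(X,{\HH}^p({\Z}))/n$ is precisely the obstruction measuring the difference between cycles-modulo-rational and cycles-modulo-algebraic equivalence, i.e.\ the $A^p$ part; dividing it out yields the quotient that surjects onto ${}_{n}A^p(X)$. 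This gives the first exact sequence. The second sequence is then obtained by passing to the direct limit over $n$ (ordered by divisibility), using that $\varinjlim_n M/n = M\otimes{\QZ}$ for the two cohomology groups and $\varinjlim_n {}_{n}A^p(X)={}_{tor}A^p(X)$ since $A^p(X)$ is a torsion-by-divisible group with $A^p(X)\otimes{\QZ}$ contributing nothing beyond the torsion once the left terms are tensored. Finally, the ``same sequences with $\mathcal{K}_p$ replaced by $\mathcal{K}_p^M$'' assertion is immediate because the entire argument only uses $\mathcal{K}_p$ through $\mathcal{K}_p/n\simeq{\HH}^p({\Z}/n)$ and the identity $H^p(X,\mathcal{K}_p)={\CH}^p(X)$, both of which hold verbatim for $\mathcal{K}_p^M$ by Kerz's theorem $\mathcal{K}_p^M/n\simeq{\HH}^p({\Z}/n)$ and the Gersten resolution of the Milnor sheaf.

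I expect the main obstacle to be the precise identification that the connecting homomorphism lands in ${}_{n}A^p(X)$ and not merely in ${}_{n}{\rm NS}^p(X)$ — that is, pinning down the diagram relating the $\mathcal{K}_p$-Gersten complex to the Bloch-Ogus complex so that the algebraic-equivalence subgroup is carved out correctly. This is a compatibility-of-differentials issue entirely analogous to the commutativity verifications in the proof of Theorem \ref{thm:main}, where one must trace a class through a Bockstein against an edge or $d_2$ map; here the careful bookkeeping of which cohomological degree of $\mathcal{K}_p$ computes ${\CH}^p$ versus $A^p$ is the delicate point, while the limit argument and the $\mathcal{K}_p^M$ replacement are formal.
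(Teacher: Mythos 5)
Your proposal follows essentially the same route as the paper: the $(p-1,p)$ case of \eqref{eqn:UCT} ending in ${}_{n}{\rm NS}^p(X)$, a parallel sequence for $\mathcal{K}_p$ with the same middle term $H^{p-1}(X,{\HH}^p({\Z}/n))$ but ending in ${}_{n}{\CH}^p(X)$, a compatibility check between the two boundary maps, a diagram chase identifying the quotient with ${}_{n}A^p(X)=\ker({}_{n}{\CH}^p(X)\to{}_{n}{\rm NS}^p(X))$, and a direct limit over $n$. The paper takes the second ingredient,
\begin{equation*}
0 \to H^{p-1}(X, \mathcal{K}_p)/n \to H^{p-1}(X, {\HH}^p({\Z}/n)) \to {}_{n}{\CH}^p(X) \to 0,
\end{equation*}
from Colliot-Th\'el\`ene--Sansuc--Soul\'e and verifies the compatibility by comparing the two Gersten-type boundary diagrams through the map ${\C}(x)^{\times}\to H^1({\C}(x),{\Z})$ coming from the exponential sequence --- exactly the ``delicate point'' you flag.

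The one step whose justification would fail as written is your derivation of that $\mathcal{K}_p$-sequence from a short exact sequence of sheaves $0\to\mathcal{K}_p\stackrel{n}{\to}\mathcal{K}_p\to\mathcal{K}_p/n\to0$ ``using torsion-freeness'': neither $\mathcal{K}_p$ nor $\mathcal{K}_p^{M}$ is torsion-free (already $\mathcal{K}_1=\mathcal{O}_X^{\times}$ contains $\mu_n$), so multiplication by $n$ is not injective on the sheaf and the long exact sequence does not directly produce $0\to H^{p-1}(X,\mathcal{K}_p)/n\to H^{p-1}(X,\mathcal{K}_p/n)\to {}_{n}H^{p}(X,\mathcal{K}_p)\to 0$; moreover the identification $\mathcal{K}_p/n\simeq{\HH}^p({\Z}/n)$ is a statement about the Milnor sheaf, not automatic for the Quillen one. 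The correct (and classical) derivation works at the level of the last three terms of the Gersten resolution, $\oplus K_2{\C}(x)\to\oplus{\C}(x)^{\times}\to\oplus{\Z}$, where only right-exactness of the comparison with the mod-$n$ Bloch--Ogus complex is needed and is supplied by Merkurjev--Suslin and Kummer theory; since $H^{p-1}$ and $H^{p}$ of the sheaf depend only on these degrees, the sequence follows. With that repair the rest of your argument --- the limit over $n$ and the Milnor variant via Kerz's Gersten resolution and $K_i=K_i^{M}$ for $i\le 2$ --- goes through exactly as in the paper.
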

 
\begin{remark}
%
When $p=2$, the second sequence reduces to the famous formula (\cite{M-S}, \cite{Mu}) 
\begin{equation*}
{}_{tor}A^2(X)  \simeq H^{1}(X, {\HH}^2({\Z}))\otimes{\QZ}  
 \simeq N^1H^3(X, {\Z}) \otimes{\QZ} \simeq {}_{tor}J^3(X)_{alg}. 
\end{equation*}
\end{remark}

\begin{proof}
We will combine three short exact sequences. 
Firstly, the $(p-1, p)$ case of \eqref{eqn:UCT} gives an exact sequence 
\begin{equation}\label{eqn:ses I}
0 \to H^{p-1}(X, {\HH}^p({\Z}))/n \to H^{p-1}(X, {\HH}^p({\Z}/n)) \to {}_{n}{\rm NS}^p(X) \to 0. 
\end{equation}
As explained at \eqref{eqn:Bockstein CD}, 
the map $H^{p-1}(X, {\HH}^p({\Z}/n)) \to {}_{n}{\rm NS}^p(X)$ is induced from the boundary map in the snake lemma for 
the middle and left columns of the diagram 
\begin{equation}\label{eqn:CD I}
\begin{CD}
\bigoplus_{X^{(p-2)}}H^2({\C}(x), {\Z}) @>>>   \bigoplus_{X^{(p-1)}}H^1({\C}(x), {\Z}) @>>> \bigoplus_{X^{(p)}} {\Z} \\ 
@VVnV     @VVnV   @VVnV \\ 
\bigoplus_{X^{(p-2)}}H^2({\C}(x), {\Z}) @>>>   \bigoplus_{X^{(p-1)}}H^1({\C}(x), {\Z}) @>>> \bigoplus_{X^{(p)}} {\Z} \\ 
@VVV     @VVV   @VVV \\ 
\bigoplus_{X^{(p-2)}}H^2({\C}(x), {\Z}/n) @>>>   \bigoplus_{X^{(p-1)}}H^1({\C}(x), {\Z}/n) @>>> \bigoplus_{X^{(p)}} {\Z}/n \\
@VVV     @VVV   @VVV \\ 
0 @.   0 @. 0. 
\end{CD}
\end{equation}

Secondly, Colliot-Th\'el\`ene, Sansuc and Soul\'e derived an exact sequence (\cite{CT-S-S} and \cite{CT0} \S 3.2)
\begin{equation}\label{eqn:ses II}
0 \to H^{p-1}(X, \mathcal{K}_p)/n \to H^{p-1}(X, {\HH}^p({\Z}/n)) \to {}_{n}{\CH}^p(X) \to 0. 
\end{equation}
As explained in \cite{CT-S-S}, \cite{CT0}, 
the map $H^{p-1}(X, {\HH}^p({\Z}/n)) \to {}_{n}{\CH}^p(X)$ is induced from the boundary map in the snake lemma for 
the middle and left columns of the diagram 
\begin{equation}\label{eqn:CD II}
\begin{CD}
\bigoplus_{X^{(p-2)}}K_2{\C}(x) @>>>   \bigoplus_{X^{(p-1)}}{\C}(x)^{\times} @>>> \bigoplus_{X^{(p)}} {\Z} \\ 
@VVnV     @VVnV   @VVnV \\ 
\bigoplus_{X^{(p-2)}}K_2{\C}(x) @>>>   \bigoplus_{X^{(p-1)}}{\C}(x)^{\times} @>>> \bigoplus_{X^{(p)}} {\Z} \\ 
@VVV     @VVV   @VVV \\ 
\bigoplus_{X^{(p-2)}}H^2({\C}(x), {\Z}/n) @>>>   \bigoplus_{X^{(p-1)}}H^1({\C}(x), {\Z}/n) @>>> \bigoplus_{X^{(p)}} {\Z}/n. \\
@VVV     @VVV   @VVV \\ 
0 @.   0 @. 0. 
\end{CD}
\end{equation}
The upper two rows come from the Gersten-Quillen resolution of $\mathcal{K}_p$. 
The columns are exact: 
the middle comes from the Kummer theory, 
and the left from the Merkurjev-Suslin theorem \cite{M-S}. 

Note that in \eqref{eqn:ses II} we may replace $\mathcal{K}_p$ by $\mathcal{K}_p^M$   
because the Gersten resolution has been established also for the Milnor $K$-theory \cite{Ke} and $K_iF=K_i^MF$ for $i\leq2$,  
so the upper rows also compute $H^{p-1}(X, \mathcal{K}_p^{M})$. 
Indeed, we have $H^{p-1}(X, \mathcal{K}_p)\simeq H^{p-1}(X, \mathcal{K}_p^{M})$. 

Thirdly, since $A^p(X)$ is divisible, the sequence 
\begin{equation}\label{eqn:ses III}
0 \to {}_{n}A^p(X) \to {}_{n}{\CH}^p(X) \to {}_{n}{\rm NS}^p(X) \to 0 
\end{equation}
remains exact. 

Combining the three exact sequences \eqref{eqn:ses I}, \eqref{eqn:ses II}, \eqref{eqn:ses III}, we obtain 
\begin{equation*}\label{eqn:cdIII}
\begin{CD}
     @. @.  0 @. \\ 
   @.  @.  @VVV \\ 
0 @>>> H^{p-1}(X, \mathcal{K}_p)/n @= H^{p-1}(X, \mathcal{K}_p)/n @>>> 0 @.    \\
 @.   @.  @VVV  @VVV \\ 
0 @>>> H^{p-1}(X, {\HH}^p({\Z}))/n @>>> H^{p-1}(X, {\HH}^p({\Z}/n)) @>>> {}_{n}{\rm NS}^p(X) @>>>  0    \\
  @.  @.  @VVV  @| \\ 
0 @>>>  {}_{n}A^p(X) @>>> {}_{n}{\CH}^p(X) @>>> {}_{n}{\rm NS}^p(X) @>>>  0    \\
   @.  @. @VVV  \\ 
@. @.  0 
\end{CD}
\end{equation*}
Commutativity at the lower right can be checked  
by comparing the boundary maps in the snake lemmas for \eqref{eqn:CD I} and \eqref{eqn:CD II}.
They are connected through the map 
${\C}(x)^{\times}\to H^1({\C}(x), {\Z})$ 
which is induced from the boundary maps of the exponential sequences on 
smooth Zariski open sets of $\overline{ \{ x \} }$.  
Now diagram chasing (or the snake lemma)
induces vertical morphisms on the left column of this diagram, and shows that it is exact. 
\end{proof}

By Proposition \ref{prop: mod n Griff} and 
Theorem \ref{prop: torsion A^p(X) and H-cohomology} with $p=d-1$, 
we conclude that the torsion part of ${\CH}_1(X)_{hom}$ is controlled by 
the degree $2d-3$ terms in the $E_2$ page of 
the Bloch-Ogus spectral sequence with ${\Z}$-coefficients. 


\end{document}